\documentclass[hha]{article}
\usepackage{amsmath}
\usepackage{amssymb}
\usepackage{amsthm}
\usepackage[mathscr]{eucal}
\usepackage[all,knot,cmtip]{xy}
\xyoption{arc}
\usepackage{soul}
\usepackage{graphicx}

\numberwithin{equation}{section}
\newtheorem{thm}{Theorem}[section]

\newtheorem{prop}[thm]{Proposition}
\newtheorem{lem}[thm]{Lemma}

\newcommand{\vmodels}{\rotatebox[origin=c]{-90}{$\models$}}
\newcommand{\modelsv}{\rotatebox[origin=c]{90}{$\models$}}

\newtheorem{cor}[thm]{Corollary}

\theoremstyle{definition}
\newtheorem{definition}[thm]{Definition}
\newtheorem{rem}[thm]{Remark}

\begin{document}
\title{On the stratification of combinatorial spectra}
\author{Ryo Horiuchi}
\date{}

\maketitle

\begin{abstract}
In this note, we investigate a mixture of combinatorial spectra and stratified simplicial sets, which would be thought of as a model of the spectrum objects of $(\infty, \infty)$-categories.
\end{abstract} 

\section{Introduction}

In this note, we investigate a mixture of combinatorial spectra and stratified simplicial sets, which would be thought of as a model of the spectrum objects of $(\infty, \infty)$-categories.

In \cite{Kan}, Kan introduced the notion of combinatorial spectrum, which is a certain presheaf in pointed sets over a category $\Delta_{st}$ that is a stabilization of the simplex category $\Delta$ in an appropriate sense.
Intuitively speaking, a combinatorial spectrum is a pointed simplicial set with $\mathbb{Z}$-graded simplices.
In \cite{KW}, Kan and Whitehead introduced a product for combinatorial spectra, which may not give rise to a monoidal structure on the nose, and showed that it works well up to homotopy.
Later, Brown showed the category of combinatorial spectra admits a model structure in \cite{B}.
Bousfield and Friedlander showed there exists a chain of Quillen equivalences between that and another model of spectra in \cite{BF}.

In \cite{OR}, Ozornova and Rovelli introduced the notion of prestratified simplicial sets, which has already appeared in \cite{V0} without name and is also a presheaf in sets over a category similar to $\Delta$, denoted by $t\Delta$.
Intuitively speaking, a prestratified simplicial set is a simplicial set with two layers of simplices.
Verity constructed in \cite{V1} a model structure on the category of stratified simplicial sets, which are prestratified simplicial sets satisfying a certain condition.
Ozornova and Rovelli then constructed a model structure on the category of prestratified simplicial sets based on Verity's work and it is expected the model structure models of $(\infty, \infty)$-categories.

In this note we investigate the presheaves in pointed sets over a stabilization of $t\Delta$.
Such a presheaf thus would be viewed as a pointed simplicial set consists of two kinds of $n$-simplices with $n\in\mathbb{Z}$.
We show the presheaf category  inherits a homotopy theory and the stratified analogue of Kan-Whitehead product is compatible with that in an appropriate sense.

\section{Preliminary}

\subsection{Stabilization of simplicial sets}
In this section, we recall combinatorial spectra from \cite{Kan}, \cite{KW} and \cite{B}.
We focus on Kan-Whitehead product and  Brown's model structure of them.

Let $\Delta$ denote the simplex category and $\star:\Delta\times\Delta\to\Delta$ denote the concatenation functor.
More precisely, for any $[m], [n]\in\Delta$, $[m]\star[n]=[m+n+1]$, and for $\theta:[m]\to[m'], \tau:[n]\to[n']\in\Delta$, the morphism $\theta\star\tau:[m+n+1]\to[m'+n'+1]$ is given by
\[(\theta\star\tau)(i) =
\begin{cases}
\theta(i) & 0 \leq i\leq m,\\
\tau(i-m-1)+m'+1 & m+1\leq i\leq m+m'+1
\end{cases}\]

\begin{rem}By definition we get the following equations:
\[d^i\star s^j=d^i\circ s^{n+j}, \ s^j\star d^i=d^{m+1+i}\circ s^j, d^i\star d^{i'}=d^{n+1+i'}\circ d^{i}, \ \    s^j\star s^{j'}=s^{m+1+j'}\circ s^{j}\ \]
for any face operators $d^i:[n-1]\to [n]$ and $d^{i'}:[n'-1]\to [n']$, and degeneracy operators $s^j:[m+1]\to [m]$ and $s^{j'}:[m'+1]\to [m']$.
\end{rem}

Using this concatenation functor, Chen, Kriz, and Pultr stabilized $\Delta$ in the following sense.

\begin{definition}
We define the following shift functors:

\[K:\Delta\to\Delta,\ K(\theta:[n]\to[m])=(\theta\star[0]:[n+1]\to[m+1]),\]

The other one $J$ is the dual of $K$.
\[J:\Delta\to\Delta,\ J(\theta:[n]\to[m])=([0]\star\theta:[n+1]\to[m+1]),\]

By using these functors we define the following categories:
\[\Delta_{st}:=\operatorname{colim}(\Delta\xrightarrow{K}\Delta\xrightarrow{K}\Delta\xrightarrow{K}\cdots)\]

\[\Delta_{st'}:=\operatorname{colim}(\Delta\xrightarrow{J}\Delta\xrightarrow{J}\Delta\xrightarrow{J}\cdots)\]

\[\Delta_{st^2}:=\operatorname{colim}(\Delta\xrightarrow{K}\Delta\xrightarrow{J}\Delta\xrightarrow{K}\Delta\xrightarrow{J}\Delta\xrightarrow{K}\cdots)\]

\end{definition}

\begin{rem}\label{identities}(1)The first one $\Delta_{st}$ is defined in \cite{CKP}.
By definition, we have $J\circ K=K\circ J$.
So we can change the order of the functors in the colimit diagram of $\Delta_{st^2}$.

(2)These three categories above admit the following descriptions respectively.
By abusing notation, the objects of $\Delta_{st}$ may be denoted by $[n]$ for all $n\in\mathbb{Z}$.
The morphism of $\Delta_{st}$ are generated by the morphisms
\[d^i:[n-1]\to[n], \ s^j:[n+1]\to[n]\]
for all $n\in\mathbb{Z}$ and integers $i, j\geq 0$ subject to the following identities
\[d^id^j=d^{j+1}d^i \ \  (i\leq j)\]
\[s^js^i=s^is^{j+1} \ \ (i\leq j)\]
\[s^jd^i= 
\begin{cases}
d^is^{j-1} & (i<j)\\
\operatorname{id} & (i\in\{j, j+1\})\\
d^{i-1}s^j &(i>j+1).
\end{cases}\]

Dually, the objects of $\Delta_{st'}$ are denoted by $[n]$ for all $n\in\mathbb{Z}$.
The morphism of $\Delta_{st'}$ are generated by the morphisms
\[d^i:[n-1]\to[n], \ s^j:[n+1]\to[n]\]
for all $n\in\mathbb{Z}$ and integers $i, j\leq n$ subject to the same identities.

The objects of $\Delta_{st^2}$ are denoted by $[n]$ for all $n\in\mathbb{Z}$.
The morphism of $\Delta_{st^2}$ are generated by the morphisms
\[d^i:[n-1]\to[n], \ s^j:[n+1]\to[n]\]
for all $n, i, j\in\mathbb{Z}$ subject to the same identities.

(3)There is a functor $\operatorname{rev}:\Delta\to\Delta$ defined by
\[\operatorname{rev}(d^i:[n-1]\to[n])=(d^{n-i}:[n-1]\to[n]),\]
\[\operatorname{rev}(s^i:[n+1]\to[n])=(s^{n-i}:[n+1]\to[n]).\]

This fits into the commutative diagram
 \[
   \xymatrix{
 \Delta\ar^{K}[r] \ar_{\operatorname{rev}}[d]& \Delta\ar^{\operatorname{rev}}[d]\\
 \Delta\ar_{J}[r]&\Delta.\\     
}
\]
This gives a functor $\Delta_{st}\to\Delta_{st'}$, which we also denote by $\operatorname{rev}$ abusing notation.
\end{rem}

\begin{definition}[\cite{Kan}, \cite{CKP}] We let $\mathbf{Set}_*$ denote the category of pointed sets and pointed maps.
\begin{enumerate}
\item A functor $\Delta_{st}^{op}\to \mathbf{Set}_*$ is called a stable simplicial set.
A stable simplicial set $X$ satisfying the following condition is called a combinatorial spectrum:
for every $x\in X$, there exists an integer $i$ such that $d_kx=*$ when $k>i$.
We denote by $\mathbf{Set}_*^{\Delta_{st}^{op}}$ the category of stable simplicial sets and natural transformations and by $\mathbf{Comb}$ the full subcategory of combinatorial spectra.

\item A functor $\Delta_{st^2}^{op}\to \mathbf{Set}_*$ is called a bistable simplicial set.
A bistable simplicial set $X$ satisfying the following condition is called a combinatorial bispectrum:
for every $x\in X$, there exist integers $i$ and $j$ such that $d_kx=*$ when $k>i$ or $k<j$.
We denote by $\mathbf{Set}_*^{\Delta_{st^2}^{op}}$ the category of bistable simplicial sets and natural transformations respectively.
\end{enumerate}
\end{definition}

We construct the stable analogue of the concatenation functor as follows:

Let $\theta:[n]\to[n']\in\Delta_{st'}$ and $\tau:[m]\to[m']\in\Delta_{st}$.
By the definitions of $\Delta_{st'}$ and $\Delta_{st}$, there exist $k, l\in\mathbb{N}$ and morphisms $\tilde{\theta}, \tilde{\tau}\in\Delta$ such that
\[\tilde{\theta}:[n+k]\to[n'+k]\in\Delta\]
represents $\theta$ and
\[\tilde{\tau}:[m+l]\to[m'+l]\in\Delta\]
represents $\tau$.
We denote $\theta\star\tau:[n]\star[m]\to[n']\star[m']$ the morphism in $\Delta_{st^2}$ represented by
\[\tilde{\theta}\star\tilde{\tau}:[n+m+k+l]\to[n'+m'+k+l]\in\Delta\]
This defines the functor again denoted by
\[\star:\Delta_{st'}\times\Delta_{st}\to\Delta_{st^2}.\]

For $X, Y\in\mathbf{Set}_*^{\Delta_{st}^{op}}$, we let $X\wedge'' Y$ denote the point-wise smash product, namely $(X\wedge'' Y)([n])=X([n])\wedge Y([n])$, where $\wedge$ denotes the smash product of pointed sets.

Let $X\wedge' Y$ denote the left Kan extension of $X\wedge''Y$ with respect to 
\[\Delta_{st}^{op}\times\Delta_{st}^{op}\xrightarrow{\operatorname{rev}\times \operatorname{id}}\Delta_{st'}^{op}\times\Delta_{st}^{op}\xrightarrow{\star}\Delta_{st^2}^{op}.\]

This construction defines a functor $\mathbf{Set}_*^{\Delta_{st}^{op}}\times \mathbf{Set}_*^{\Delta_{st}^{op}}\to\mathbf{Set}_*^{\Delta_{st^2}^{op}}$, which is the stable analogue of the join construction of simplicial sets.
To define the product on $\mathbf{Set}_*^{\Delta_{st}^{op}}$ called reduced join in \cite{KW}, we recall a functor $\mathbf{Set}_*^{\Delta_{st^2}^{op}}\to\mathbf{Set}_*^{\Delta_{st}^{op}}$.

\begin{definition}[\cite{Kan}]
Let $V\in\mathbf{Set}_*^{\Delta_{st^2}^{op}}$.
We define $V_{-1}\in\mathbf{Set}_*^{\Delta_{st}^{op}}$ as follows.
For any $[n]\in \Delta_{st}$, we set
\[V_{-1}([n]):=\{x\in V([n+1])|d^V_jx=*, j<1\}\]

The generators of maps are given as follows:
\[d_i:=d_{i+1}^V, s_i:=s_{i+1}^V,\]
where $d_{j}^V$ and $s_{j}^V$ are the generators for $V$.
\end{definition}

\begin{definition}[\cite{KW}]Let $X, Y\in\mathbf{Set}_*^{\Delta_{st}^{op}}$. The Kan-Whitehead smash product $X\wedge Y$ is $(X\wedge'Y)_{-1}$.
\end{definition}

\begin{rem}In \cite{KW} Kan and Whitehead took a functor before applying the functor $(\mathchar`-)_{-1}:\mathbf{Set}_*^{\Delta_{st^2}^{op}}\to\mathbf{Set}_*^{\Delta_{st}^{op}}.$
More precisely, by using freely generating functor $\operatorname{F}:\mathbf{Set}\to\mathbf{Grp}$ piecewise, where $\mathbf{Grp}$ denotes the category of groups and homomorphisms, they considered $(\operatorname{F}(X\wedge'Y))_{-1}$.
There the celebrated fact that the underlying simplicial set of any simplicial group is a Kan complex plays a pivotal role.

\end{rem}

Kan and Whitehead have shown this product plays a role in the stable homotopy theory.
To descrive that, we recall prespectra from \cite{Kan}.

As is pointed out in \cite{L}, Kan's suspension functor (\cite[Definition 2.2]{Kan}) $\operatorname{S}:\mathbf{sSet}\to \mathbf{sSet}_*$ is the left Kan extension along yoneda embedding of
\[\Delta\to \mathbf{sSet}_*, [n]\mapsto \Delta[n+1]_+/(\Delta[n]_+\wedge \Delta[0]_+).\]

\begin{definition}[\cite{Kan}]
(1) A prespectrum $L$ consists of

(i) a sequence of pointed simplicial sets $L_i$ with $i\in\mathbb{N}$,

(ii) a sequence of monomorphisms $\lambda_i:\operatorname{S}L_i\to L_{i+1}$ of pointed simplicial sets with $i\in\mathbb{N}$, where $\operatorname{S}$ denotes the suspension functor \cite[Definition 2.2]{Kan}.

(2) A morphism $\psi:\{L_i, \lambda_i\}\to \{M_i, \mu_i\}$ of prespectra is a sequence of morphisms $\psi_i:L_i\to M_i$ of pointed simplicial sets such that $\psi_{i+1}\circ\lambda_i=\mu_i\circ\operatorname{S}\psi_i$.

(3) A morphism $\psi:\{L_i, \lambda_i\}\to \{M_i, \mu_i\}$ of prespectra is called a weak homotopy equivalence if for every $q\in\mathbb{Z}$ the abelian group homomorphism
\[\pi_q(\psi):\operatorname{colim}_{i\to\infty}\pi_{i+q}(L_i)\to\operatorname{colim}_{i\to\infty}\pi_{i+q}(M_i)\]
is an isomorphism.
\end{definition}

Kan related combinatorial spectra and prespectra as follows.

\begin{definition}Let $X\in\mathbf{Set}_*^{\Delta_{st}^{op}}$.
We define the corresponding prespectrum $\operatorname{Ps}(X)=\{X_i, \xi_i\}$ as follows.

For any $i\in\mathbb{N}$, the pointed set of $n$-simplices in $X_i$ is given by
\[X_i([n]):=\{\alpha\in X([n-i])|d_0\cdots d_n\alpha=*, d_j\alpha=*,   (j\geq n)\},\]
where $*$ denotes (the degeneracy of) the base point.
Face and degeneracy operators on $X_i$ will be induced by those of $X$ and $X_i$ is indeed a pointed simplicial set with them.

For any $i\in\mathbb{N}$, the monomorphism $\xi_i:\operatorname{S}X_i\to X_{i+1}$ is given by $\xi_i(\alpha, \phi)=\alpha$.
\end{definition}

Brown has shown that the weak homotopy equivalences give rise to a model structure.
\begin{thm}[\cite{B}]The category of combinatorial spectra admits the model structure in which
\begin{itemize}
\item cofibrations are precisely monomorphims,
\item a morphism $f$ is a fibration if and only if $\operatorname{Ps}(f)_i$ is a fibration for all $i$ in the classical model structure on pointed simplicial sets,
\item weak equivalences are the morphisms whichare weak homotopy equivalences after taking $\operatorname{Ps}$.
\end{itemize}
\end{thm}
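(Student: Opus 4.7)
The plan is to verify Quillen's axioms by leveraging the definitions themselves: weak equivalences and fibrations are pulled back from the Kan--Quillen model structure on pointed simplicial sets via $\operatorname{Ps}$, while cofibrations are the independently specified class of monomorphisms. First I would fix generating sets. For each $n\in\mathbb{Z}$, let $\Delta_{st}[n]_+\in\mathbf{Set}_*^{\Delta_{st}^{op}}$ denote the pointed representable, suitably truncated so as to satisfy the bounded-degeneracy condition defining $\mathbf{Comb}$; the boundary inclusions $\partial\Delta_{st}[n]_+\hookrightarrow\Delta_{st}[n]_+$ then serve as generating cofibrations, and the corresponding horn inclusions as generating trivial cofibrations.

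Closure under retracts and 2-out-of-3 for weak equivalences are immediate from the classical case via the definition of $\operatorname{Ps}$, and closure of cofibrations (monomorphisms) and fibrations (those $f$ with $\operatorname{Ps}(f)_i$ a Kan fibration for every $i$) under retracts is formal. Factorizations then come from the small object argument applied to the two generating sets; monomorphisms are preserved by pushouts and transfinite composition in a presheaf category, so the first leg of the cofibration--trivial fibration factorization is indeed a monomorphism.

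The main obstacle, as in the classical Kan--Quillen story, is showing that every trivial cofibration has the left lifting property against every fibration. This reduces to proving that the class of monomorphisms that become weak equivalences after $\operatorname{Ps}$ coincides with the saturated class generated by the horn inclusions above. I would attack this by a stable version of the anodyne-extension argument: filter such an $i$ by the $\mathbb{Z}$-graded skeleta of its cofibre, use the bounded-degeneracy hypothesis to ensure the filtration is exhaustive and well-behaved, and at each stage reduce to the classical anodyne-extension theorem for pointed simplicial sets by passing through $\operatorname{Ps}$. The converse lifting direction is then formal: a fibration in the stated sense automatically right-lifts against the generating horns, hence against their saturation, because each $\operatorname{Ps}(f)_i$ is a Kan fibration and the saturation of the horns inside $\mathbf{Comb}$ is detected level-wise after $\operatorname{Ps}$.
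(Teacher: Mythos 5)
The paper does not actually prove this statement: it is quoted as background and attributed to Brown \cite{B}, so there is no internal proof to compare against and your proposal must stand on its own. As it stands, it has a genuine gap at the decisive step. The crux is your claim that the monomorphisms which become weak equivalences after $\operatorname{Ps}$ coincide with the saturation of the stable horn inclusions, and that this saturation is ``detected level-wise after $\operatorname{Ps}$.'' Two things block the reduction as you state it. First, $\operatorname{Ps}$ is defined by face-vanishing conditions ($d_0\cdots d_n\alpha=*$ and $d_j\alpha=*$ for $j\geq n$); it is not a left adjoint and does not carry pushouts or transfinite compositions of spectra to level-wise pushouts of prespectra, so attaching a stable horn does not induce, level by level, an attachment of simplicial horns, and the assertion that the saturation inside $\mathbf{Comb}$ can be checked after $\operatorname{Ps}$ is exactly what needs proof. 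Second, the weak equivalences in the theorem are stable ones, isomorphisms on $\operatorname{colim}_{i}\pi_{i+q}$, not level-wise equivalences, so even where you can pass to levels the classical anodyne-extension theorem does not apply to a stable trivial cofibration. What makes a level-wise point of view viable for combinatorial spectra is a nontrivial input from Kan's work, recorded in the paper's remark following the theorem: $\pi_n((\operatorname{F}X)_j)\cong\pi_{n+1}((\operatorname{F}X)_{j+1})$, whence a map of combinatorial spectra is a stable equivalence if and only if $\operatorname{Ps}$ of it is a termwise weak homotopy equivalence. Your sketch neither states nor uses this fact (nor any substitute, such as Kan's free-group spectrum $\operatorname{F}$ providing fibrant replacements and explicit factorizations), and without it the ``reduce to the classical anodyne-extension theorem by passing through $\operatorname{Ps}$'' step does not go through.

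There are also smaller unverified points. The pointed representables of $\Delta_{st}$ are not combinatorial spectra; the condition defining $\mathbf{Comb}$ is a face-vanishing condition, not a bound on degeneracies, so the ``truncated'' cells have to be the free spectra on a single simplex with high faces at the basepoint, and the claim that every monomorphism is a relative cell complex on their boundary inclusions requires an Eilenberg--Zilber-type lemma for stable simplicial sets together with a $\mathbb{Z}$-graded skeletal induction that is unbounded below; this is plausible but not automatic. Likewise, the converse identification of the fibrations of the statement (level Kan fibrations after $\operatorname{Ps}$) with the maps having the right lifting property against your generating horns rests on the same unproven compatibility of $\operatorname{Ps}$ with the saturation. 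A correct argument along Brown's lines leans on Kan's results about $\operatorname{F}$ and the suspension isomorphism above rather than on a formal small-object argument pushed through $\operatorname{Ps}$.
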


Kan and Whitehead has proven that the product is compatible with the weak equivalences.
\begin{thm}[\cite{KW}]
The functor
\[(\operatorname{F}((\mathchar`-)\wedge'(\mathchar`-)))_{-1}:\mathbf{Comb}\times\mathbf{Comb}\to\mathbf{Comb}\]
preserves weak equivalences.
\end{thm}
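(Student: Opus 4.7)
My plan is to pass to prespectra via $\operatorname{Ps}$, exploit the fact that applying $\operatorname{F}$ piecewise yields simplicial-group-valued objects (hence Kan complexes at every prespectrum level), and then verify weak-equivalence-preservation on a generating class of morphisms via a cellular argument. Given weak equivalences $f\colon X\to X'$ and $g\colon Y\to Y'$, I factor the smashed morphism as $(f\wedge\operatorname{id}_{Y'})\circ(\operatorname{id}_X\wedge g)$; by 2-out-of-3 it therefore suffices to fix one entry and show the functor is homotopical in the other.

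Fix $Y$ and set $T_Y := (\operatorname{F}((\mathchar`-)\wedge' Y))_{-1}$. I would first establish two formal properties of $T_Y$: it preserves monomorphisms, which is immediate since $\wedge'$, $\operatorname{F}$, and $(\mathchar`-)_{-1}$ are constructed via coends, free-group formation, and a sub-presheaf condition; and it commutes with filtered colimits, because $\wedge'$ is a left Kan extension, $\operatorname{F}$ is a left adjoint, and $(\mathchar`-)_{-1}$ is a levelwise equalizer that commutes with filtered colimits of pointed sets. Combined with Brown's cellular description of cofibrations (monomorphisms), this reduces the theorem to checking that $T_Y$ sends the generating trivial cofibrations (sphere--horn inclusions, appropriately shifted) to weak equivalences.

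The key simplification provided by $\operatorname{F}$ is that each prespectrum level $\operatorname{Ps}(T_Y(X))_i$ is the underlying simplicial set of a simplicial group, hence a Kan complex, so that stable homotopy groups are computed directly by simplicial homotopy classes of maps from spheres. The main obstacle will then be the explicit identification of $\pi^s_*(T_Y(X))$ in terms of $\pi^s_*(X)$ and $\pi^s_*(Y)$: the left Kan extension $\wedge'$ involves the concatenation functor $\star$ combinatorially, gluing simplices across different degrees, and the degree-shift $(\mathchar`-)_{-1}$ then selects those vanishing on a face. Tracking exactly which simplices survive through this pipeline, and carrying out the induction on the cellular filtration to deduce invariance under the generating horn inclusions, is the combinatorial heart of the argument and essentially reproduces Kan and Whitehead's original calculation.
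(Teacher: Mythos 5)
Your outline is not yet a proof, and the two places where it falls short are precisely where the content of the theorem lives. First, the reduction from ``preserves weak equivalences'' to ``sends generating trivial cofibrations to weak equivalences'' is not automatic for the functor $T_Y:=(\operatorname{F}((\mathchar`-)\wedge' Y))_{-1}$. Ken Brown's lemma is available (every combinatorial spectrum is cofibrant, since cofibrations are the monomorphisms), but to pass from the generators to arbitrary trivial cofibrations you need the class of monomorphisms $i$ with $T_Y(i)$ a weak equivalence to be closed under pushout and transfinite composition, and the usual way to get this is cocontinuity of the functor. $T_Y$ is not cocontinuous: although $(\mathchar`-)\wedge' Y$ preserves colimits and $\operatorname{F}$ is a left adjoint into group objects, what one keeps afterwards is the \emph{underlying} pointed-set spectrum of a free group spectrum, and the underlying pointed set of an amalgamated free product is not the pushout of underlying pointed sets; moreover $(\mathchar`-)_{-1}$ is a sub-object construction, not a left adjoint. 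So ``preserves monomorphisms and filtered colimits plus cellularity'' does not yield the claimed reduction; one needs the word-length/filtration arguments for free group spectra (comparing $T_Y$ of a cell attachment with the corresponding homotopy pushout), and this is a substantial part of the actual Kan--Whitehead argument, not a formality. You would also need to justify that Brown's model structure admits the ``shifted sphere--horn'' generating trivial cofibrations you invoke, which the present paper does not record.

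Second, and more decisively, the verification on the generators --- identifying the homotopy type of $(\operatorname{F}(X\wedge' Y))_{-1}$ for $X$ a (shifted) sphere or horn, equivalently relating the stable homotopy of the reduced join to that of $X$ and $Y$ --- is exactly the theorem being proved, and your proposal explicitly defers it (``essentially reproduces Kan and Whitehead's original calculation''). As written, the plan reduces the statement to the very computation it is supposed to establish. Note also that the paper itself offers no proof to compare against: the theorem appears in the preliminaries as a quotation of \cite{KW}. So for your attempt to stand on its own it would have to carry out both the cellular-invariance step (with the free-group subtlety above) and the sphere/horn computation; at present both are missing.
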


Let $\mathbb{S}$ be the stable simplicial set given by
\[(\mathbb{S})([n]) =
\begin{cases}
\{*, \alpha_n\} & 0 \leq n,\\
\{*\} & n<0
\end{cases}\]
In other words, this is the stable simplicial set corresponding to the simplicial set $\Delta[0]$.
It is shown there that the sphere spectrum acts as the unit of the product up to homotopy.

\begin{thm}[\cite{KW}] For any combinatorial spectrum $X$, $(\operatorname{F}(\mathbb{S}\wedge' X))_{-1}$ is weakly equivalent to $X$.
\end{thm}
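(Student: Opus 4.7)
The plan is to construct an explicit natural comparison map $\eta_X\colon X\to(\operatorname{F}(\mathbb{S}\wedge'X))_{-1}$ and verify that it induces an isomorphism on the stable homotopy groups of the associated prespectra. The guiding idea is that $\mathbb{S}$ is concentrated in nonnegative degrees with a single non-basepoint simplex $\alpha_n$ in each degree, so smashing with $\mathbb{S}$ should, up to stable homotopy, simply reproduce $X$.

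First I would work out $\mathbb{S}\wedge'X$ explicitly. The pointwise smash $\mathbb{S}\wedge''X$ equals $X([n])$ when $n\geq 0$ and is trivial otherwise, because $\mathbb{S}$ only contributes the single simplex $\alpha_n$ in each nonnegative degree. The left Kan extension along $\star\circ(\operatorname{rev}\times\operatorname{id})$ is then generated by pairs $(\alpha_p,x)$ with $p\geq 0$ and $x\in X([q])$, sitting in degree $[p]\star[q]$ of $\Delta_{st^2}$, modulo the identifications produced by the face--degeneracy relations of Remark \ref{identities} together with the $\operatorname{rev}$-twist on the $\mathbb{S}$-factor. Applying the pointwise free-group functor $\operatorname{F}$ then yields a bistable simplicial group, so each pointed simplicial set appearing in the associated prespectrum is a Kan complex; the subfunctor $(-)_{-1}$ finally picks out the simplices $y$ with $d_j y = *$ for $j<1$.

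Next I would define $\eta_X$ by sending $x\in X([n])$ to the class of $\alpha_0\wedge x$ under the morphism $[0]\star[n]\to[n+1]$ in $\Delta_{st^2}$, noting that $\operatorname{rev}$ fixes $[0]$ so no twist is incurred on the $\mathbb{S}$-factor. Because $\mathbb{S}([-1])=*$, we have $d_0(\alpha_0\wedge x)=*\wedge x=*$, so $\eta_X$ does land inside $(\operatorname{F}(\mathbb{S}\wedge'X))_{-1}$; a direct verification using Remark \ref{identities} shows it commutes with the remaining face and degeneracy operators, giving naturality in $X$. To conclude I would pass through $\operatorname{Ps}$ and show that the induced map on each colimit $\operatorname{colim}_i \pi_{i+q}$ is an isomorphism: the contributions from decompositions with $p\geq 1$ are absorbed in the stable colimit by the suspension-stability of homotopy groups, and the remaining component recovers $\pi_*(X)$. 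The principal obstacle will be the bookkeeping for the Kan extension --- keeping control of the identifications arising from $\star\circ(\operatorname{rev}\times\operatorname{id})$, and verifying that the higher-$p$ decompositions contribute only degenerately in the stable limit; once this is in hand, the role of $\operatorname{F}$ is the standard one of ensuring Kan-ness so that $\pi_*$ can be computed directly from simplicial homotopy classes.
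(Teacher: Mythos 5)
First, a point of calibration: the paper does not prove this statement at all --- it is quoted from Kan--Whitehead \cite{KW} ("the sphere spectrum acts as the unit of the product up to homotopy"), so there is no internal proof to compare against; your proposal has to stand on its own as a proof of a genuinely nontrivial result from the literature. Your comparison map is reasonable and is essentially the expected unit: sending $x\in X([n])$ to (the class of) $\alpha_0\wedge x$ in degree $[0]\star[n]=[n+1]$ does land in the $(-)_{-1}$ part, although you should check $d_j(\alpha_0\wedge x)=*$ for \emph{all} $j<1$, not only $j=0$; the negative-index faces exist in $\Delta_{st^2}$ and must also be seen to kill the $\mathbb{S}$-factor (they do, since they land in $\mathbb{S}([-1])=\{*\}$ after the $\operatorname{rev}$-twist, but this needs to be said).

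The genuine gap is the final step, which is where all the content of the theorem lives. You assert that "the contributions from decompositions with $p\geq 1$ are absorbed in the stable colimit by suspension-stability," but this does not engage with the actual structure of the left Kan extension: since every $\alpha_p$ with $p\geq 1$ is a degeneracy of $\alpha_0$, the bistable set $\mathbb{S}\wedge'X$ is already generated under the $\Delta_{st^2}$-operators by the simplices $\alpha_0\wedge x$, so there are no independent "higher-$p$ components" to absorb; the difficulty is rather to identify the homotopy type of $(\operatorname{F}(\mathbb{S}\wedge'X))_{-1}$ itself. Concretely, one must compute $\operatorname{Ps}$ of the target level by level, relate the prespectrum levels of the join $\mathbb{S}\wedge'X$ to suspensions of the levels of $X$, and use that the shift $(-)_{-1}$ is homotopically well behaved only after applying $\operatorname{F}$ (each level becomes a simplicial group, hence Kan, and Kan's homotopy groups $\pi_q(X)=\pi_{q+i}((\operatorname{F}X)_i)$ are themselves defined through $\operatorname{F}$, so the target's homotopy groups involve a second application of $\operatorname{F}$ that has to be compared with the first). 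None of this bookkeeping is carried out --- you explicitly defer it as "the principal obstacle" --- so what you have is a plan with the decisive isomorphism on stable homotopy groups asserted rather than proved. As written, the argument would not establish the theorem; it would need either the Kan--Whitehead level-wise analysis of the reduced join or some substitute for it.
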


\begin{rem}
In \cite[Section 10]{Kan} Kan defined the $q$-th homotopy group $\pi_q(X)$ of a combinatorial spectrum $X$ to be the homotopy group $\pi_{q+i}((\operatorname{F}X)_i)$ with an integer $i>-q$, where $F$ denotes the free group functor.
This is well-defined since $\pi_n((\operatorname{F}X)_j)=\pi_{n+1}((\operatorname{F}X)_{j+1})$ for all $n$ and $j$.
It is shown there that a morphism $f$ of combinatorial spectra is a weak equivalence if and only if the morphism $\operatorname{Ps}(f)$ of prespectra is a termwise weak homotopy equivalence.
\end{rem}

\subsection{Stratification of simplicial sets}
In this section, we recall  (pre-)stratified simplicial sets from mainly \cite{OR} and \cite{V1}.
We focus on the join construction, the lax Gray-Verity tensor product and Ozornova-Rovelli model structure of them.

We recall the category $t\Delta$ from \cite{OR}.
Its set of objects consists of $[n]$ with $0\leq n\in\mathbb{Z}$ and $[m]_t$ with $1\leq m\in\mathbb{Z}$.
The morphisms in $t\Delta$ are generated by the following morphisms
\[d^i:[m-1]\to [m], 0\leq i\leq m,\]
\[s^i:[m+1]\to [m], 0\leq i\leq m,\]
\[\varphi:[m]\to [m]_t,\] 
\[\zeta^i:[m+1]_t\to [m], 0\leq i\leq m,\]
subject to the usual cosimplicial identities on $d^i$'s and $s^i$'s, and the following additional relations
\begin{itemize}
\item $\zeta^i\varphi=s^i:[m+1]\to[m]$, $1\leq m$ and $0\leq i\leq m$
\item $s^i\zeta^{j+1}=s^j\zeta^i:[m+2]_t\to[m]$, $0\leq i\leq j\leq m$
\end{itemize}
We may view $\Delta$ as a subcategory of $t\Delta$ in the evident way.

\begin{definition}[\cite{OR}]We let $\mathbf{Set}$ denote the category of sets and maps.
A prestratified simplicial set $X$ is a functor $X:t\Delta^{op}\to\mathbf{Set}$.
A stratified simplicial set $X$ is a prestratified simplicial set such that the maps
\[\varphi^*:X([m]_t)\to X([m])\]
are injective for all $m\geq 1$.
We let $\mathbf{Set}^{t\Delta^{op}}$ denote the category of prestratified simplicial sets and natural transformations and let $\mathbf{msSet}$ denote the full subcategory of stratified simplicial sets.
We call an element in $X([n]_t)$ a marked $n$-simplex for any $X\in\mathbf{Set}^{t\Delta^{op}}$ and $n>0$.
\end{definition}

\begin{definition}[\cite{OR}]Let $n$ be a natural number and $k\in[n]$.
\begin{itemize}

  \item {\it The standard thin $n$-simplex} $\Delta[n]_t$ is the simplicial set with marking whose underlying simplicial set is the standard simplicial set $\Delta[n]$ and  
  \[
  m\Delta[n]_t = \begin{cases}
    d\Delta[n]\cup\{\operatorname{Id}_{[n]}\} & (n\neq 0) \\
    d\Delta[n] & (n=0).
  \end{cases}
\]
  
  \item {\it The $k$-admissible $n$-simplex} $\Delta^k[n]$ with $n\geq 1$\footnote{In the case $n=0$, hence $k=0$, we define $\Delta^0[0]$ to be the simplicial set with marking whose underlying simplicial set is $\Delta[0]$ and $m\Delta^0[0]=d\Delta[0]$.} is the simplicial set with marking whose underlying simplicial set is the standard simplicial set $\Delta[n]$ and
  \[m\Delta^k[n]=d\Delta[n]\cup\{\alpha\in\Delta[n]| \{k-1, k, k+1\}\cap[n]\subset\operatorname{Im}(\alpha)\}.\] 
  
  \item {\it The $(n-1)$-dimensional $k$-admissible horn} $\Lambda^k[n]$ with $n\geq 1$ is the regular simplicial subset with marking of $\Delta^k[n]$ whose underlying simplicial set is the usual simplicial $k$-th horn.
  
  \item $\Delta^k[n]''$ (respectively, $\Lambda^k[n]'$) is the simplicial set with marking whose underlying simplicial set is the same as that of $\Delta^k[n]$ (respectively, $\Lambda^k[n]$) and its marked simplices are $m\Delta^k[n]$ (respectively, $m\Lambda^k[n]$) with all its $(n-1)$-simplices.
  
  \item $\Delta^k[n]':=\Delta^k[n]\cup\Lambda^k[n]'.$
  \item $\Delta[3]_{eq}$ is the simplicial set with marking whose underlying simplicial set is $\Delta[3]$ and all $n$-simplices for $n\geq2$ and the non-degenerate two 1-simplices  $\overline{02}$ and $\overline{13}$ are marked, where
 \[\overline{02}:[1]\to [3], \ \ \overline{02}(0)=0, \  \overline{02}(1)=2,\] 
  \[\overline{13}:[1]\to [3], \ \ \overline{13}(0)=1, \  \overline{13}(1)=3.\] 
   \item $\Delta[3]^{\sharp}$ is the simplicial set with marking whose underlying simplicial set is $\Delta[3]$ with \[m\Delta[3]^{\sharp}=\bigcup_{n\geq1}\Delta[3]_n.\]
  \end{itemize}
\end{definition}

\begin{rem}(1)Note that the functor $K$ preserves admissible simplices.
More precisely, for any morphism $\alpha:[r]\to[n]$ with $\{k-1, k, k+1\}\cap[n]\subset\operatorname{Im}(\alpha)$, the morphism $K(\alpha):[r+1]\to[n+1]$ satisfies that $\{k-1, k, k+1\}\cap[n+1]\subset\operatorname{Im}(K(\alpha))$.
The same holds for $J$.

(2)There are evident inclusions $\Lambda^k[n]\to\Delta^k[n]$, $\Delta^k[n]'\to\Delta^k[n]''$, $\Delta[3]_{eq}\to\Delta[3]^{\sharp}$, and $\Delta[n]\star\Delta[3]_{eq}\to\Delta[n]\star\Delta[3]^{\sharp}$ for all $k, n$.
We may call them elementary anodyne extensions.
Note that by definition the inclusions $\Delta[0]\to\Delta[1]_t$ are the elementary anodyne extensions $\Lambda^k[1]\to\Delta^k[1]$ with $k\in\{0, 1\}$.

We call prestratified simplicial sets having the right lifting property with respect to elementary anodyne extensions precomplicial sets (\cite{OR}).
\end{rem}

The following model structures are expected to model $(\infty, \infty)$-categories.
\begin{thm}[\cite{OR}]The category $\mathbf{Set}^{t\Delta^{op}}$ of prestratified simplicial sets admits a model structure in which
\begin{itemize}
\item the cofibrations are precisely the monomorphisms
\item the fibrant objects are precisely the precomplicial sets
\end{itemize}
These classes of morphisms give rise to a cofibrantly generated model structure on the category $\mathbf{msSet}$ of stratified simplicial sets.
These two model structures are Quillen equivalent.
\end{thm}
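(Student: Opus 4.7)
The plan is to construct the model structure on $\mathbf{Set}^{t\Delta^{op}}$ via Smith's recognition theorem for combinatorial model categories, and then transfer it across the reflective inclusion $i\colon\mathbf{msSet}\hookrightarrow\mathbf{Set}^{t\Delta^{op}}$. For the generating cofibrations I would take the boundary inclusions $\partial\Delta[n]\to\Delta[n]$ for $n\geq 0$ together with the marking inclusions $\Delta[n]\to\Delta[n]_t$ for $n\geq 1$; a standard skeletal filtration argument identifies their saturation under pushouts, transfinite compositions and retracts with the class of all monomorphisms in this presheaf category. For the generating acyclic cofibrations I would take the elementary anodyne extensions catalogued in the previous remark, namely the admissible horn inclusions $\Lambda^k[n]\to\Delta^k[n]$, the inclusions $\Delta^k[n]'\to\Delta^k[n]''$, $\Delta[3]_{eq}\to\Delta[3]^{\sharp}$, and $\Delta[n]\star\Delta[3]_{eq}\to\Delta[n]\star\Delta[3]^{\sharp}$, so that by construction the fibrant objects with respect to the resulting lifting structure are precisely the precomplicial sets.

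Weak equivalences should be defined intrinsically, for instance as the maps inducing bijections on homotopy classes of maps into every precomplicial set, where homotopy is detected by a cylinder built from a suitable marked 1-simplex. This class is closed under 2-out-of-3 and retracts and is accessible, because $\mathbf{Set}^{t\Delta^{op}}$ is locally presentable and the mapping-space functor into a fixed precomplicial set is accessible. The main obstacle I anticipate is the verification of the nontrivial half of Smith's hypotheses, namely that a cofibration with the right lifting property against every elementary anodyne extension is a weak equivalence. This is essentially the content of Verity's complicial theory; my plan is to import Verity's analysis of marked horn fillers and the associated simplicial enrichment, carefully tracking the arguments through the adjunction $L\dashv i$ below to handle those prestratified simplicial sets that are not yet stratified.

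The transfer to $\mathbf{msSet}$ proceeds via the left adjoint $L$ to $i$, which replaces each $X([n]_t)$ by its image in $X([n])$ under $\varphi^*$. One declares a map in $\mathbf{msSet}$ to be a cofibration, fibration, or weak equivalence precisely when it is so after applying $i$; cofibrant generation is inherited by applying $L$ to the generators above, and coincidence with Verity's model structure is then verified by comparing generating classes. Because $i$ is fully faithful, the counit $LiY\to Y$ is an isomorphism, so the Quillen equivalence reduces to showing that the unit $\eta_X\colon X\to iLX$ is always a weak equivalence. With the definition above, this amounts to showing that every precomplicial set is already stratified, so that any map $X\to Z$ into a precomplicial set factors uniquely through $\eta_X$; the factorisation property then produces the required bijections on homotopy classes of maps and completes the proof of Quillen equivalence.
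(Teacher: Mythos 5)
This theorem is quoted from Ozornova--Rovelli and the present paper gives no proof of it, so your sketch has to be measured against their construction (which runs through Cisinski--Olschok-type machinery built on Verity's model structure). Your first genuine gap is the treatment of the acyclic cofibrations: you propose to feed the elementary anodyne extensions into Smith's theorem as the generating set $J$, ``so that by construction the fibrant objects are precisely the precomplicial sets.'' That requires $\operatorname{cof}(J)$ to coincide with the class of all monomorphisms that are weak equivalences, i.e.\ that every trivial cofibration be a retract of a relative elementary anodyne extension; this is exactly what is not known (and not claimed) in the complicial setting, and it is why the theorem only identifies the \emph{fibrant objects}, not the fibrations, in terms of the anodyne maps. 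In the actual construction the generating trivial cofibrations are produced abstractly (Cisinski/Olschok, or Smith with $J$ unspecified), and the identification of the fibrant objects with the precomplicial sets is a separate argument using the marked interval and deformation-retract techniques; your phrase ``by construction'' hides precisely that step. Relatedly, accessibility and 2-out-of-3 for your intrinsically defined class $W$ (bijections on homotopy classes into all precomplicial sets) need the cylinder and the anodyne class to interact correctly (an exact cylinder compatible with the anodynes); this is standard but not automatic, and it is where Verity's input actually enters.

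The second and more serious gap is the last step. You reduce the Quillen equivalence to the assertion that every precomplicial set is already stratified, and then argue by unique factorization through the unit $\eta_X\colon X\to iLX$. That assertion does not follow and is almost certainly false: all elementary anodyne extensions are monomorphisms, so the right lifting property against them imposes only \emph{existence} of fillers and markings, never injectivity of $\varphi^*\colon X([m]_t)\to X([m])$. Indeed, duplicating a marking on a precomplicial set does not destroy the lifting property, and pushouts of the entire anodyne maps $\Delta^k[n]'\to\Delta^k[n]''$ along maps into an already-marked object themselves create duplicate markings, so non-stratified objects are unavoidable in this theory. Consequently a map $X\to Z$ into a precomplicial $Z$ need not factor through $\eta_X$, and your bijection-on-homotopy-classes argument collapses. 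The correct route is the opposite one: show that the reflection map $X\to iLX$ (collapsing the marking sets onto their images in $X([n])$) is always a weak equivalence --- for instance because marking-duplication maps are weak equivalences, being built from the anodyne extensions --- and combine this with the fact that $i$ is fully faithful and creates weak equivalences and fibrations; the derived unit must also be handled with a fibrant replacement in $\mathbf{msSet}$ rather than on the nose. Your identification of the generating cofibrations ($\partial\Delta[n]\to\Delta[n]$ and $\Delta[n]\to\Delta[n]_t$) and of the reflection $L\dashv i$ is correct and does match the regular skeletal Reedy analysis that Ozornova--Rovelli use, but as written the two steps above are where the proof would fail.
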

We may call these model structures Ozornova-Rovelli model structures.

The following simple morphisms in $\Delta$ (or in $t\Delta$) are used to define the lax Gray-Verity product.

\begin{definition}[\cite{V1}]For any $(p, q)\in\mathbb{N}^2$, there are four maps in $\Delta$:

\[\modelsv^{p, q}_1:[p]\to [p+q], \ \modelsv^{p, q}_1(i)=i,\]

\[\modelsv^{p, q}_2:[q]\to [p+q], \ \modelsv^{p, q}_2(i)=i+p,\]

\[\vmodels^{p, q}_1:[p+q]\to[p], \ \vmodels^{p, q}_1(i) =
\begin{cases}
i & 0 \leq i\leq p,\\
p & p<i\leq p+q,
\end{cases}\]

\[\vmodels^{p, q}_2:[p+q]\to[q], \ \vmodels^{p, q}_1(i) =
\begin{cases}
0 & 0 \leq i< p,\\
i-p & p\leq i\leq p+q,
\end{cases}\]

\end{definition}

\begin{rem}\label{tsunagi}
Let $p, q\in\mathbb{N}$.
We note that the morphisms $\modelsv_{1}^{p, q}:[p]\to[p+q]$ (resp. $\modelsv_{2}^{p, q}:[q]\to[p+q]$) in $\Delta$ is compatible with $J:\Delta\to\Delta$ (resp. $K:\Delta\to\Delta$).
More precisely, 
\[J(\modelsv_{1}^{p, q})=\modelsv_{1}^{p+1, q},  K(\modelsv_{2}^{p, q})=\modelsv_{2}^{p, q+1}\]
hold.
Similarly, we also have 
\[J(\vmodels_{1}^{p, q})=\vmodels_{1}^{p+1, q}, K(\vmodels_{2}^{p, q})=\modelsv_{2}^{p, q+1}.\]

In addition, the morphisms $\modelsv_{1}^{p, q}$ and $\modelsv_{2}^{p, q}$ in $\Delta$ correspond each other via the endofunctor $\operatorname{rev}:\Delta\to\Delta$.
Since by definition $\modelsv_{1}^{p, q}=s^p\circ s^{p+1}\circ\cdots\circ s^{p+q-1}$ and $\modelsv_{2}^{q, p}=s^0\circ s^{0}\circ\cdots\circ s^{0}$ hold, so by the definition of $\operatorname{rev}$ we have 
\[\operatorname{rev}(\modelsv_{1}^{p, q})=\modelsv_{2}^{q, p}, \ \operatorname{rev}(\modelsv_{2}^{p, q})=\modelsv_{1}^{q, p}.\]
Also, we have the equations below
\[\vmodels^{p, q}_1\circ\modelsv^{p, q}_1=\operatorname{id}, \ \ \vmodels^{p, q}_2\circ\modelsv^{p, q}_2=\operatorname{id}\]
\end{rem}

\begin{rem}\label{shuffle} As is demonstrated in \cite{V0}, the non-degenerate $(p+q)$-simplices in the simplicial set $\Delta[p]\times\Delta[q]$ correspond to the shortest paths from the left bottom corner $(0,0)$ to the right upper corner $(p,q)$ in the figure below.

 \[
   \xymatrix{
 (0, q) \ar[r]& (1, q)\ar[r]& \cdots\ar[r]&(p, q)\\
 (0, q-1)\ar[r]\ar[u]& (1, q-1)\ar[r]\ar[u]& \cdots\ar[r]& (p, q-1)\ar[u] \\ 
 \cdots\ar[u]& \cdots\ar[u]& \cdots& \cdots\ar[u]& \\     
 (0,1)\ar[r]\ar[u]& (1,1)\ar[r]\ar[u]& \cdots\ar[r]& (p,1)\ar[u]\\ 
 (0, 0) \ar[r]\ar[u]& (1, 0)\ar[r]\ar[u]& \cdots\ar[r]&\ar[u] (p, 0)\\ 
}
\]

For instance, the $(p+q)$-simplex $(\vmodels^{p, q}_1, \vmodels^{p, q}_2)\in(\Delta[p]\times\Delta[q])([p+q])$ corresponds to the shortest path turning the right bottom corner $(p,0)$.
\end{rem}

\begin{definition}[\cite{V1}]Let $X, Y\in\mathbf{msSet}$. Their lax Gray-Verity product $X\otimes Y$ is the following stratified simplicial set:
the underlying simplicial set is the cartesian product $X\times Y$ and an $n$-simplex $x\otimes y\in X \otimes Y$ is marked if and only if for any $(p, q)\in\mathbb{N}^2$ with $p+q=n$, $x\cdot\modelsv^{p, q}_1\in X([p]_t)$ or $y\cdot\modelsv^{p, q}_2\in Y([q]_t)$.
\end{definition}

\begin{rem}
Let $\Delta[l]$ denote the prestratified simplicial set represented by $[l]\in t\Delta$, which is a stratified simplicial set.
Note that there is only one unmarked $(m+n)$-simplex $(\vmodels^{m, n}_1, \vmodels^{m, n}_2)$ in $\Delta[m]\otimes\Delta[n]$. 
Any other $(m+n)$-simplex is marked by definiton.
Intuitively speaking, discarding marked simplices, $\Delta[m]\otimes\Delta[n]$ is similar to $\Delta[m+n]$, while the join of $\Delta[m]$ and $\Delta[n]$ is $\Delta[m+n+1]$.
Note that $\Delta[m]\otimes\Delta[n]$ has one or more unmarked $k$-simplices with $k\leq m+n$ when $m, n\geq 1$.
\end{rem}

We will extend the join construction of simplicial sets recalled in the last section to prestratified simplicial sets.
To do so, abusing the notation, we now extend the concatenation functor $\star:\Delta\times\Delta\to\Delta$ to $\star:t\Delta\times t\Delta\to t\Delta$.
Roughly speaking, viewing $\Delta\subset t\Delta$ and $\star|_{\Delta\times\Delta}=\star$, $\zeta^i$'s and $\varphi$'s behave like $s^i$'s and the identity morphisms respectively.

\begin{definition}
We define the concatenation functor $\star:t\Delta\times t\Delta\to t\Delta$ as follows.
\begin{enumerate}
\item On the objects in $t\Delta$, $\star$ acts as follows:

For $m, n\geq 1$,
\[[m]_t\star[n]=[m]\star[n]_t=[m]_t\star[n]_t=[m+n+1]_t,\] 
\[[m]\star[n]=[m+n+1].\]

For $m\geq 0$,
\[[0]\star[m]=[m]\star[0]=[m+1].\]

For $m\geq 1$,
\[[0]\star[m]_t=[m]_t\star[0]=[m+1]_t.\]

\item On the generators of morphisms in $t\Delta$, $\star$ acts as follows:

Viewing $d^i$'s and $s^j$'s in $t\Delta$ as in $\Delta$, their concatenations are defined as in Section 2.1.

For $\zeta^j:[m+1]_t\to[m]$ and $d^i:[n-1]\to[n]$,
\[d^i\star \zeta^j=d^i\circ \zeta^{n+j}, \ \zeta^j\star d^i=d^{m+1+i}\circ \zeta^j.\]
For $\zeta^j:[m+1]\to[m]_t$ and $s^i:[n+1]\to[n]$,
\[s^i\star \zeta^j=s^{i}\circ \zeta^{n+1+j}, \ \zeta^j\star s^i=s^{m+1+i}\circ \zeta^j.\]
For $\zeta^j:[m+1]\to[m]_t$ and $\zeta^i:[n+1]\to[n]_t$,
\[\zeta^i\star \zeta^j=\zeta^{i}\circ \zeta^{n+1+j}, \ \zeta^j\star s^i=s^{m+1+i}\circ \zeta^j.\]
For $\varphi:[n]\to[n]_t$ and $d^i:[m-1]\to[m]$,
\[\varphi\star d^i=\varphi\circ d^{n+1+i}, \ d^i\star\varphi=\varphi\circ d^i.\]
For $\varphi:[n]\to[n]_t$ and $s^i:[m+1]\to[m]$,
\[\varphi\star s^i=\varphi\circ s^{n+1+i}, \ s^i\star\varphi=\varphi\circ s^i.\]
For $\varphi:[n]\to[n]_t$ and $\zeta^i:[m+1]_t\to[m]$,
\[\varphi\star \zeta^i=\varphi\circ \zeta^{n+1+i}, \ \zeta^i\star\varphi=\varphi\circ \zeta^i.\]
For $\varphi_n:[n]\to[n]_t$ and $\varphi_m:[m]\to[m]_t$, where we put subscripts for convenience, 
\[\varphi_n\star \varphi_m=\varphi_m\star\varphi_n=\varphi_{m+n}.\]
\end{enumerate}

\end{definition}

\begin{rem}One might define $[m]\star[n]_t$ to be $[m+n+1]$ for $m, n\geq 1$, namely the concatenation of an unmarked simplex and a marked simplex should be unmarked.
But it  and $[0]\star[n]_t=[n+1]_t$ would imply $[1]\star[n]_t=([0]\star[0])\star[n]_t\neq[0]\star([0]\star[n]_t)=[0]\star[n+1]_t$.
\end{rem}

We define the join functor $(\mathchar`-)\oplus(\mathchar`-):\mathbf{Set}^{t\Delta^{op}}\times\mathbf{Set}^{t\Delta^{op}}\to\mathbf{Set}^{t\Delta^{op}}$ to be the Day convolution of $\star:t\Delta\times t\Delta\to t\Delta$.
Note that our definition is compatible with the join of stratified simplicial sets \cite[Definition 33]{V1}.

\begin{prop}For any $X\in\mathbf{msSet}$, the functors $(\mathchar`-)\oplus X, X\oplus(\mathchar`-):\mathbf{msSet}\to\mathbf{msSet}$ are left Quillen functors with respect to Ozornova-Rovelli model structure.
\end{prop}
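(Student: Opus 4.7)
The plan is to reduce the claim to a pushout-product axiom that is then checked on generators. Since $\oplus$ is the Day convolution of $\star:t\Delta\times t\Delta\to t\Delta$, both $(-)\oplus X$ and $X\oplus(-)$ are cocontinuous and hence admit right adjoints. Being left adjoints, they will be left Quillen as soon as they preserve cofibrations and acyclic cofibrations. Both halves follow from the bifunctorial pushout-product statement: for cofibrations $i:A\to B$ and $j:C\to D$, the pushout-product $(A\oplus D)\cup_{A\oplus C}(B\oplus C)\to B\oplus D$ is a cofibration, and it is moreover acyclic as soon as one of $i,j$ is. Specialising $C\to D$ or $A\to B$ to $\emptyset\to X$ recovers the statements about $(-)\oplus X$ and $X\oplus(-)$. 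Since the Ozornova--Rovelli model structure is cofibrantly generated, the pushout-product axiom only needs to be checked when both $i$ and $j$ are drawn from the respective generating sets.

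The cofibration half is immediate. The generating cofibrations are the boundary inclusions $\partial\Delta[n]\to\Delta[n]$ and the thinning inclusions $\Delta[n]\to\Delta[n]_t$. From the formulas for $\star$ on objects one has $\Delta[m]\oplus\Delta[n]=\Delta[m+n+1]$ and $\Delta[m]_t\oplus\Delta[n]=\Delta[m]\oplus\Delta[n]_t=\Delta[m+n+1]_t$, so that the pushout-products among boundary and thinning inclusions are identified with explicit monomorphisms of stratified simplicial sets inside these representables.

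For the acyclic half, we check pushout-products of elementary anodyne extensions against generating cofibrations (and symmetrically, since $\oplus$ is not commutative). The model case is the admissible horn inclusion $\Lambda^k[n]\to\Delta^k[n]$ against the boundary $\partial\Delta[m]\to\Delta[m]$: using that the shift functors $K$ and $J$ preserve admissibility of simplices, one identifies $\Delta^k[n]\oplus\Delta[m]$ with $\Delta^k[n+m+1]$ as stratified simplicial sets and, tracing the marking through the defining union, identifies the source of the pushout-product with $\Lambda^k[n+m+1]$; the pushout-product is thus itself an elementary anodyne extension. Pushout-products involving the thinning generators $\Delta[l]\to\Delta[l]_t$ are treated analogously, using the rule $[m]_t\star[n]=[m+n+1]_t$ to propagate marking across the join.

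The main obstacle lies in the generators built from the equatorial $3$-simplex, namely $\Delta[3]_{eq}\to\Delta[3]^\sharp$ and $\Delta[n]\star\Delta[3]_{eq}\to\Delta[n]\star\Delta[3]^\sharp$. The second generator is itself defined via the join, so forming its pushout-product with $\oplus$ produces iterated joins whose marked and unmarked simplices must be tracked carefully. Associativity of $\star$ in $t\Delta$ lets one rewrite $(\Delta[n]\star\Delta[3]_{eq})\oplus\Delta[m]$ as $\Delta[n]\star(\Delta[3]_{eq}\oplus\Delta[m])$ and thereby reduce to smaller instances, but showing that the resulting inclusion lies in the saturated class of the elementary anodyne extensions demands a careful case analysis on which simplices become marked after joining.
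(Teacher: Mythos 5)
Your strategy --- verify a pushout-product axiom on generating (trivial) cofibrations --- is genuinely different from the paper, which instead cites Verity's result that $(\mathchar`-)\oplus X$ and $X\oplus(\mathchar`-)$ are left Quillen for the non-saturated weak complicial model structure and then reduces the Ozornova--Rovelli case to checking the single new family $\Delta[3]_{eq}\oplus X\to\Delta[3]^\sharp\oplus X$, handled via representables and compatibility of $\oplus$ with colimits. As written, your route has a structural gap: for the acyclic half, the elementary anodyne extensions are only a pseudo-generating set for the Ozornova--Rovelli model structure (they detect fibrant objects and fibrations with fibrant codomain); they are not known to be a set of generating trivial cofibrations, so checking pushout-products against them does not by itself show that $(\mathchar`-)\oplus X$ preserves all trivial cofibrations. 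You would need a further step, e.g.\ the criterion that a cofibration-preserving left adjoint is left Quillen once its right adjoint preserves fibrant objects and fibrations between fibrant objects, or the localization argument the paper uses (left Quillen for Verity's structure plus sending the saturation maps to weak equivalences).

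There is also a concrete error in your model computation: $\Delta^k[n]\oplus\Delta[m]$ is not isomorphic to $\Delta^k[n+m+1]$ as stratified simplicial sets, and the source of the pushout-product is not $\Lambda^k[n+m+1]$. The underlying simplicial sets do agree, but the markings do not: for $k=n$, a nondegenerate join simplex $\alpha\star\beta$ is marked in $\Delta^n[n]\oplus\Delta[m]$ as soon as $\operatorname{Im}(\alpha)\supseteq\{n-1,n\}$, whereas admissibility in $\Delta^n[n+m+1]$ also requires the vertex $n+1$ to lie in the image (a symmetric failure occurs for $k=0$ on the other side); the cited fact that $K$ and $J$ preserve admissibility only covers joining with a single vertex. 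This mismatch is precisely why Verity's Chapter 6 argument is not a one-line identification. You also omit the generating anodyne extensions $\Delta^k[n]'\to\Delta^k[n]''$ altogether, and you leave the cases $\Delta[3]_{eq}\to\Delta[3]^\sharp$ and $\Delta[n]\star\Delta[3]_{eq}\to\Delta[n]\star\Delta[3]^\sharp$ as an acknowledged open case analysis --- yet this is exactly the only verification needed beyond quoting Verity, i.e.\ the actual content of the paper's proof. So the proposal does not yet establish the proposition.
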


\begin{proof}In \cite[Chapter 6]{V1}, it has been proven that for any stratified simplicial set $X$, $(\mathchar`-)\oplus X$ and $X\oplus(\mathchar`-)$ are left Quillen functors with respect to the model structure for non-saturated weak complicial sets on $\mathbf{msSet}$.

It is enough to show that the morphism $\Delta[3]_{eq}\oplus X\to\Delta[3]^\sharp\oplus X$ is a trivial cofibration with respect to Ozornova-Rovelli model structure.
Since $\Delta[3]_{eq}\oplus \Delta[n]_?\to\Delta[3]^\sharp\oplus \Delta[n]_?$ is a trivial cofibration for any $n\in\mathbb{N}$, where $[n]_?$ denotes $[n]$ or $[n]_t$, and the join construction is compatible with colimits, the morphism $\Delta[3]_{eq}\oplus X\to\Delta[3]^\sharp\oplus X$ is a trivial cofibration.
\end{proof}

Since every stratified simplicial set is cofibrant, we obtain the following.
\begin{cor}\label{1}Let $X, X', Y, Y'\in\mathbf{msSet}$. 
Suppose we have weak equivalences $f:X\to X'$ and $g:Y\to Y'$.
Then the morphism $f\oplus g:X\oplus Y\to X'\oplus Y'$ is also a weak equivalence.
\end{cor}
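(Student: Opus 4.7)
The plan is to reduce to the previous proposition via the standard Ken Brown's lemma argument, then compose two one-sided weak equivalences.

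First, I would factor $f\oplus g:X\oplus Y\to X'\oplus Y'$ as the composite
\[X\oplus Y \xrightarrow{f\oplus Y} X'\oplus Y \xrightarrow{X'\oplus g} X'\oplus Y',\]
using functoriality of $\oplus$ in each variable separately. Since $\mathbf{msSet}$ is a model category in which every object is cofibrant (the cofibrations are precisely the monomorphisms in the Ozornova--Rovelli model structure, and the empty stratified simplicial set is the initial object), all four of $X, X', Y, Y'$ are cofibrant.

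Next I would invoke Ken Brown's lemma: any left Quillen functor preserves weak equivalences between cofibrant objects. By the preceding proposition, $(\mathchar`-)\oplus Y:\mathbf{msSet}\to\mathbf{msSet}$ is a left Quillen functor, so applying it to the weak equivalence $f:X\to X'$ between cofibrant objects yields a weak equivalence $f\oplus Y:X\oplus Y\to X'\oplus Y$. Similarly, since $X'\oplus(\mathchar`-)$ is a left Quillen functor, applying it to $g:Y\to Y'$ gives a weak equivalence $X'\oplus g:X'\oplus Y\to X'\oplus Y'$.

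Finally, weak equivalences in a model category are closed under composition (by the two-out-of-three property, or directly), so the composite $f\oplus g = (X'\oplus g)\circ(f\oplus Y)$ is a weak equivalence. There is no real obstacle here beyond confirming that the Ozornova--Rovelli model structure does have every object cofibrant, which is immediate from the description of the cofibrations as the monomorphisms.
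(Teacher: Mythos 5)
Your proof is correct and is exactly the argument the paper intends: the corollary is stated right after the proposition that $(\mathchar`-)\oplus X$ and $X\oplus(\mathchar`-)$ are left Quillen, with the remark that every stratified simplicial set is cofibrant, and your factorization through $X'\oplus Y$ plus Ken Brown's lemma and two-out-of-three is the standard way to fill in that implicit step. No issues.
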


\begin{rem}In \cite[\S 1.2.8]{Lur}, it is proven that the join of two quasi-categories is a quasi-category. 
The same argument there shows that the join of two precomplicial sets has the right lifting property with respect to the inclusions $\Lambda^k[n]\to\Delta^k[n]$, but it may not show that the join has the right lifting property with respect to the other anodyne extensions.
\end{rem}

\section{Results}

\subsection{Stratified stabilization of simplicial sets}
In this section, by using the functor $\star:t\Delta\times t\Delta\to t\Delta$, we define stratified analogue of $\Delta_{st}$, $\Delta_{st'}$ and $\Delta_{st^2}$.
To do that, by abusing notation, we define the following shift functors.

\begin{definition}
We denote by $K:t\Delta\to t\Delta$ the shift functor $(\mathchar`-)\star[0]:t\Delta\to t\Delta$ and by $J:t\Delta\to t\Delta$ the other shift functor $[0]\star(\mathchar`-):t\Delta\to t\Delta$
\end{definition}

More explicitly, these functors act on the generators of morphisms as follows:

\[K(d^i:[n-1]\to[n])=d^i:[n]\to[n+1],\]
\[K(s^i:[n+1]\to[n])=s^i:[n+2]\to[n+1],\]
\[K(\varphi:[n]\to[n]_t)=\varphi:[n+1]\to[n+1]_t,\] 
\[K(\zeta^i:[n+1]_t\to[n])=\zeta^i:[n+2]\to[n+1]_t,\]
\[J(d^i:[n-1]\to[n])=d^{i+1}:[n]\to[n+1],\] 
\[J(s^i:[n+1]\to[n])=s^{i+1}:[n+2]\to[n+1],\] 
\[J(\varphi:[n]\to[n]_t)=\varphi:[n+1]\to[n+1]_t,\] 
\[J(\zeta^i:[n+1]_t\to[n])=\zeta^{i+1}:[n+2]\to[n+1]_t.\]

By using these functors we obtain the following categories:
\[t\Delta_{st}:=\operatorname{colim}(t\Delta\xrightarrow{K}t\Delta\xrightarrow{K}t\Delta\xrightarrow{K}\cdots)\]
\[t\Delta_{st'}:=\operatorname{colim}(t\Delta\xrightarrow{J}t\Delta\xrightarrow{J}t\Delta\xrightarrow{J}\cdots)\]
\[t\Delta_{st^2}:=\operatorname{colim}(t\Delta\xrightarrow{K}t\Delta\xrightarrow{J}t\Delta\xrightarrow{K}t\Delta\xrightarrow{J}t\Delta\xrightarrow{K}\cdots)\]

\begin{rem}(1)By definition, we have $J\circ K=K\circ J$.
So we can change the order of the functors in the colimit diagram of $t\Delta_{st^2}$.

(2)These three categories admit the following descriptions:

By abusing notation, the objects of $t\Delta_{st}$ may be denoted by $[n]$ or $[n]_t$ for all $n\in\mathbb{Z}$.
The morphism of $t\Delta_{st}$ are generated by the morphisms
\[d^i:[n-1]\to[n], \ s^j:[n+1]\to[n], \ \zeta^k:[n+1]_t\to[n], \ \varphi:[n]\to[n]_t\]
for all $n\in\mathbb{Z}$ and integers $i, j, k\geq 0$ subject to the stratified version of the identities in Remark \ref{identities}.
Note that there is $[0]_t\in t\Delta_{st}$

Dually, the objects of $t\Delta_{st'}$ are denoted by $[n]$ or $[n]_t$ for all $n\in\mathbb{Z}$.
The morphism of $t\Delta_{st'}$ are generated by the morphisms
\[d^i:[n-1]\to[n], \ s^j:[n+1]\to[n], \ \zeta^k:[n+1]_t\to[n], \ \varphi:[n]\to[n]_t\]
for all $n\in\mathbb{Z}$ and integers $i, j, k\leq n$ subject to the same identities.

The objects of $t\Delta_{st^2}$ are denoted by $[n]$ or $[n]_t$ for all $n\in\mathbb{Z}$.
The morphism of $t\Delta_{st^2}$ are generated by the morphisms
\[d^i:[n-1]\to[n], \ s^j:[n+1]\to[n], \ \zeta^k:[n+1]_t\to[n], \ \varphi:[n]\to[n]_t\]
for all $n, i, j\in\mathbb{Z}$ subject to the same identities.

We may view in the evident way the categories $\Delta_{st}$, $\Delta_{st'}$, and $\Delta_{st^2}$ as subcategories of $t\Delta_{st}$, $t\Delta_{st'}$, and $t\Delta_{st^2}$ respectively.

(3)Abusing notation, we define the stratified analogue of $\operatorname{rev}:\Delta_{st}\to\Delta_{st'}$.
To do that, we define $\operatorname{rev}:t\Delta\to t\Delta$  as follows:

On the objects, $\operatorname{rev}([n])=[n]$ and $\operatorname{rev}([n]_t)=[n]_t$.
On the generators of morphisms,
\[\operatorname{rev}(d^i:[n-1]\to[n])=d^{n-i}:[n-1]\to[n],\] \[\operatorname{rev}(s^i:[n+1]\to[n])=s^{n-i}:[n+1]\to[n],\]\[\operatorname{rev}(\varphi:[n]\to[n]_t)=\varphi:[n]\to[n]_t,\] \[\operatorname{rev}(\zeta^i:[n+1]_t\to[n])=\zeta^{n-i}:[n+1]\to[n]_t.\]

This fits into the commutative diagram
 \[
   \xymatrix{
 t\Delta\ar^{K}[r] \ar_{\operatorname{rev}}[d]& t\Delta\ar^{\operatorname{rev}}[d]\\
 t\Delta\ar_{J}[r]&t\Delta.\\     
}
\]
This defines a functor $t\Delta_{st}\to t\Delta_{st'}$, which we also denote by $\operatorname{rev}$ abusing notation.

\end{rem}

By using the functor $\star:t\Delta\times t\Delta\to t\Delta$, we extend the functor $\star:\Delta_{st'}\times\Delta_{st}\to \Delta_{st^2}$ to $\star:t\Delta_{st'}\times t\Delta_{st}\to t\Delta_{st^2}$, again abusing notation.

Let $\theta:[n]_?\to[n']_?\in t\Delta_{st'}$ and $\tau:[m]_?\to[m']_?\in t\Delta_{st}$, where $[n]_?$ denotes $[n]$ or $[n]_t$.
By the definitions of $t\Delta_{st'}$ and $t\Delta_{st}$, there exist $k, l\in\mathbb{N}$ and morphisms $\tilde{\theta}, \tilde{\tau}$ in $t\Delta$ such that
\[\tilde{\theta}:[n+k]_?\to[n'+k]_?\in t\Delta\]
represents $\theta$ and
\[\tilde{\tau}:[m+l]_?\to[m'+l]_?\in t\Delta\]
represents $\tau$.
Denote $\theta\star\tau:[n]_?\star[m]_?\to[n']_?\star[m']_?$ the map in $\Delta_{st^2}$ represented by
\[\tilde{\theta}\star\tilde{\tau}:[n+m+k+l]_?\to[n'+m'+k+l]_?\in\Delta.\]
This construction defines a functor $t\Delta_{st'}\times t\Delta_{st}\to t\Delta_{st^2}$.

\begin{definition}We let $\mathbf{Set}_*^{t\Delta_{st}^{op}}$ denote the category of functors $t\Delta_{st}^{op}\to \mathbf{Set}_*$ and call the objects prestratified stable simplicial sets.
For any $X\in\mathbf{Set}_*^{t\Delta_{st}^{op}}$ and $[n]_t\in t\Delta_{st}$, we call elements in $X([n]_t)$ marked $n$-simplices.
\end{definition}

\begin{rem}For any $A\in\mathbf{Set}^{t\Delta^{op}}$, we may write $\Sigma_{+}^{\infty}A$ for the corresponding prestratified stable simplicial set.

\[\Sigma_{+}^{\infty}A([n])=
\begin{cases}
A([n])\coprod\{*\} & n\geq 0,\\
\{*\} & n<0,
\end{cases}\]
and $d_j\alpha=*$ for any $\alpha\in\Sigma_{+}^{\infty}A([m])$ and $j>m$.
\end{rem}

As we have defined the join product on $\mathbf{Set}_*^{\Delta_{st}^{op}}$, we here define an analogous product on $\mathbf{Set}_*^{t\Delta_{st}^{op}}$.
For $X, Y\in\mathbf{Set}_*^{t\Delta_{st}^{op}}$, we let $X\wedge'' Y$ denote the point-wise smash product, namely $(X\wedge'' Y)([n])=X([n])\wedge Y([n])$, where $\wedge$ denotes the smash product of pointed sets.

Let $X\wedge' Y$ denote the left Kan extension of $X\wedge''Y$ with respect to 
\[t\Delta_{st}^{op}\times t\Delta_{st}^{op}\xrightarrow{\operatorname{rev}\times \operatorname{id}} t\Delta_{st'}^{op}\times t\Delta_{st}^{op}\xrightarrow{\star} t\Delta_{st^2}^{op}.\]

This construction defines a functor $(\mathchar`-)\wedge'(\mathchar`-):\mathbf{Set}_*^{\Delta_{st}^{op}}\times \mathbf{Set}_*^{\Delta_{st}^{op}}\to\mathbf{Set}_*^{\Delta_{st^2}^{op}}$, which is the staratified analogue of the join construction of stable simplicial sets.
To define a product on $\mathbf{Set}_*^{t\Delta_{st}^{op}}$, the stratified analogue of the Kan-Whitehead product on stable simplicial sets, we introduce a functor $\mathbf{Set}_*^{t\Delta_{st^2}^{op}}\to\mathbf{Set}_*^{t\Delta_{st}^{op}}$.

\begin{definition}Let $V\in\mathbf{Set}_*^{t\Delta_{st^2}^{op}}$.
We define $V_{-1}\in\mathbf{Set}_*^{t\Delta_{st}^{op}}$ as follows.
For any $[n], [m]_t\in t\Delta_{st}^{op}$, we set
\[V_{-1}([n]):=\{x\in V([n+1])|d^V_jx=*, j<1\},\]
\[V_{-1}([m]_t):=\{x\in V([n+1]_t)|d^V_j\varphi^V x=*, j<1\}.\]
The operators are given as follows:
\[d_i:=d_{i+1}^V, \ s_i:=s_{i+1}^V, \ \varphi:=\varphi^V, \ \zeta_i:=\zeta_{i+1}^V,\]
where $d_{j}^V$, $s_{j}^V$, $\varphi^V$ and $\zeta_{j}^V$ are the operators for $V$.
\end{definition}

\begin{definition}Let $X, Y\in\mathbf{Set}_*^{t\Delta_{st}^{op}}$. The stratified Kan-Whitehead smash product $X\wedge Y$ is $(X\wedge'Y)_{-1}$.
\end{definition}

By Remark \ref{tsunagi}, the following is well defined.

\begin{definition}Let $p, q\in\mathbb{Z}$.

(1) We denote by $\modelsv_{1}^{p, q}:[p]\to[p+q]$ the morphism in $\Delta_{st'}$ represented by a morphism $\modelsv_{1}^{p+k, q}:[p+k]\to[p+q+k]$ in $\Delta$ for some $k\in\mathbb{N}$.
Similarly, we denote by $\modelsv_{2}^{p, q}:[q]\to[p+q]$ the morphism in $\Delta_{st}$ represented by a morphism $\modelsv_{2}^{p, q+k}:[q+k]\to[p+q+k]$ in $\Delta$ for some $k\in\mathbb{N}$.

(2) We denote the morphism $\operatorname{rev}(\modelsv_{1}^{p, q}):[p]\to[p+q]$ in $\Delta_{st'}$ by $\modelsv_{2}^{q, p}:[p]\to[p+q]$.
Similarly, we denote the morphism $\operatorname{rev}(\modelsv_{2}^{p, q}):[q]\to[p+q]$ in $\Delta_{st}$ by $\modelsv_{1}^{q, p}:[q]\to[p+q]$.
\end{definition}

We may view these morphisms as those in $t\Delta_{st}$ and $t\Delta_{st'}$ respectively.

\begin{definition}
Let $X, Y\in \mathbf{Set}_*^{t\Delta_{st}^{op}}$.
The stable analogue of lax Gray-Verity product $X\tilde{\otimes}Y$ is defined as follows:
\begin{itemize}
\item its underlying stable simplicial set is the point-wise smash product $X\wedge Y$
\item an $n$-simplex $x\tilde{\otimes}y\in X\tilde{\otimes}Y$ is marked if and only if for any $(p, q)\in\mathbb{Z}^2$ with $p+q=n$, $x\circ\modelsv^{p, q}_1\in X([p]_t)$ or $y\circ\modelsv^{p, q}_2\in Y([q]_t)$.
\end{itemize}

\end{definition}

This product $\tilde{\otimes}$ is a straightforward analogue of lax Gray tensor product for stratified simplicial sets.
The lax Gray-Verity product plays a pivotal role in weak $\omega$-category theory (\cite{V2} and \cite{ORV}).
Thus, it would be fair to expect that its stable analogue also would pray a role in stable objects.
We below construct a natural morphism between it and stratified Kan-Whitehead product.

\begin{prop}
There exists a natural morphism $X\tilde{\wedge}Y\to X\tilde{\otimes}Y$.
\end{prop}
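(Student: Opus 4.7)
The underlying stable simplicial set of $X\tilde{\otimes}Y$ is the pointwise smash $X\wedge''Y$, while $X\wedge Y=(X\wedge'Y)_{-1}$ lives one dimension higher inside the Kan extension $X\wedge'Y$. The plan is to define the natural morphism $\phi:X\wedge Y\to X\tilde{\otimes}Y$ levelwise by exploiting the description of $X\wedge'Y$ as a pointed coend. Concretely, every element of $(X\wedge'Y)([n+1])$ is represented by a wedge $x\wedge y\in X(A)\wedge Y(B)$ together with a structure map $[n+1]\to\operatorname{rev}(A)\star B$ in $t\Delta_{st^2}$. For the canonical representative with $A=[p]$, $B=[q]$, $p+q=n$, and structure map the identity, set
\[
\phi_{[n]}(x\wedge y):=(x\cdot\vmodels_1^{p,q})\wedge(y\cdot\vmodels_2^{p,q})\in X([n])\wedge Y([n]),
\]
where $\vmodels_1^{p,q},\vmodels_2^{p,q}$ are viewed as morphisms in $t\Delta_{st}$, and then extend by the universal property of the Kan extension. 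The definition at marked levels $[n]_t$ is analogous, taking representatives in $X([p]_t)\wedge Y([q])$ or $X([p])\wedge Y([q]_t)$.

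After this construction, two routine checks remain. The first is well-definedness on the coend: the relations identifying $(A,B,f\circ(g\star h),x\wedge y)\sim(A',B',f,x\cdot g\wedge y\cdot h)$ translate into cosimplicial identities that are compatible with the formulas for $\vmodels_i^{p,q}$ under the shift functors $K,J$, as recorded in Remark~\ref{tsunagi}. The second is naturality of $\phi$ with respect to the generators $d^i,s^j,\varphi,\zeta^k$ of $t\Delta_{st}$: since the operators on $V_{-1}$ are the shifted operators $d_{j+1}^V,s_{j+1}^V,\varphi^V,\zeta_{j+1}^V$ and the operators on $X\wedge''Y$ act componentwise, this reduces to standard identities between $\vmodels_i^{p,q}$ and basic face/degeneracy operators together with Remark~\ref{identities}.

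The main obstacle is the preservation of markings. A marked $n$-simplex $z\in(X\wedge Y)([n]_t)$ admits a representative $x\wedge y$ with at least one factor marked, and we must verify that $\phi(z)$ satisfies the Gray--Verity condition: for every $(p',q')\in\mathbb{Z}^2$ with $p'+q'=n$, either $(x\cdot\vmodels_1^{p,q})\cdot\modelsv_1^{p',q'}\in X([p']_t)$, or the analogous statement for $Y$ holds. Using the identities in Remark~\ref{tsunagi}, the composites $\modelsv_1^{p',q'}\circ\vmodels_1^{p,q}$ and $\modelsv_2^{p',q'}\circ\vmodels_2^{p,q}$ in $t\Delta_{st}$ can be computed explicitly, and a case analysis on whether $p'\leq p$ or $p'>p$ shows that at least one of the two composites factors through a morphism landing in the marked factor. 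The marking condition then follows from the functoriality of $X$ or $Y$ applied to this composite. This last step is the combinatorial heart of the proof.
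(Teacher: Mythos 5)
Your construction coincides with the paper's: you send a simplex of $X\tilde{\wedge}Y$ represented by $x\in X([p])$, $y\in Y([q])$ with $p+q=n$ to $(x\cdot\vmodels^{p,q}_1,\,y\cdot\vmodels^{p,q}_2)$ and use the retraction identity $\vmodels^{p,q}_i\circ\modelsv^{p,q}_i=\operatorname{id}$ of Remark~\ref{tsunagi} to see that markings are preserved, exactly as in the paper (the paper likewise leaves the coend well-definedness and naturality checks implicit). The one difference is that you verify the marking condition of $\tilde{\otimes}$ at \emph{every} decomposition $(p',q')$ --- where for $p'\neq p$ the relevant composite $\vmodels^{p,q}_1\circ\modelsv^{p',q'}_1$ (note the order; you wrote the composition backwards) or $\vmodels^{p,q}_2\circ\modelsv^{p',q'}_2$ is a surjection, so the restricted simplex is degenerate and hence marked via $s^i=\zeta^i\varphi$ --- while the paper checks only the single pair $(p,q)$; your extra case analysis is exactly what is needed if ``for any'' in the definition of $\tilde{\otimes}$ is read universally, so it is worth writing out rather than only sketching.
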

\begin{proof}Let $X, Y\in\mathbf{Set}_*^{t\Delta_{st}^{op}}$ and $x\tilde{\wedge}y$ be an unmarked $n$-simplex of $X\tilde{\wedge}Y$.
Then by the definition of $X\tilde{\wedge}Y$ there exit $p, q\in\mathbb{Z}$ with $p+q=n$ such that $x\in X([p])$ and $y\in Y([q])$.
We assign to it the $n$-simplex $(x\circ\vmodels^{p, q}_1, y\circ\vmodels^{p, q}_2)$ of $X\tilde{\otimes}Y$.

Let $x\tilde{\wedge}y$ be a marked $n$-simplex of $X\tilde{\wedge}Y$.
Then there exit $p, q\in\mathbb{Z}$ with $p+q=n$ such that 
\begin{enumerate}
\item $x\in X([p]_t)$ and $y\in Y([q])$ or
\item $x\in X([p])$ and $y\in Y([q]_t)$ or
\item $x\in X([p]_t)$ and $y\in Y([q]_t)$.
\end{enumerate}

For each case, we again consider $(x\circ\vmodels^{p, q}_1, y\circ\vmodels^{p, q}_2)$ of $X\tilde{\otimes}Y$.
We need to show that $(x\circ\vmodels^{p, q}_1, y\circ\vmodels^{p, q}_2)$ is marked.
It is enough to see the first case, the others are the same.
Since $(x\circ\vmodels^{p, q}_1)\circ\modelsv^{p, q}_1=x$ and $x\in X$ is marked, $(x\circ\vmodels^{p, q}_1, y\circ\vmodels^{p, q}_2)\in X\tilde{\otimes}Y$ is also marked by definition. 
This defines a natural map $X\tilde{\wedge}Y\to X\tilde{\otimes}Y.$
\end{proof}

\begin{rem}As we have seen in Remark \ref{shuffle}, any non-degenerate $(p+q)$-simplex in $\Delta[p]\times\Delta[q]$ can be expressed as a shortest path in the ordered set $[p]\times[q]$.
The simplex $(\vmodels^{p, q}_1, \vmodels^{p, q}_2)$ corresponds to the most right-lower path.

We could take another pair of surjections $\theta:[p+q]\to[p]$ and $\tau:[p+q]\to[q]$ and consider the map $X([p])\times Y([q])\to (X\tilde{\otimes}Y)_{p+q}$, $(x, y)\mapsto (x\circ\theta, y\circ\tau)$.
But by definition, the simplices $x\circ\theta\circ\modelsv^{p, q}_1$ and $y\circ\tau\circ\modelsv^{p, q}_2$ are degenerate, hence marked.
Thus, $(x\circ\theta, y\circ\tau)\in X\tilde{\otimes}Y$ is always marked.

In this sense, the map we constructed in the proof above may be the only suitable one.
\end{rem}

\subsection{A homotopy theory for stable precomplicial sets}
In this section, we introduce a homotopy theory on prestratified stable simplicial sets, which is a straight forward analogue of the structure of a category of fibrant objects on combinatorial spectra given by Brown \cite{B}.

Mimicking Kan's suspension functor, we introduce the following functor.
\begin{definition}
The suspension functor $\operatorname{S}:\mathbf{Set}^{t\Delta^{op}}\to\mathbf{Set}_*^{t\Delta^{op}}$ is the left Kan extension along yoneda embedding of
\[t\Delta\to \mathbf{Set}_*^{t\Delta^{op}}, [n]_?\mapsto (\Delta[n+1]_?)_+/(\Delta[n]_?)_+\wedge \Delta[0]_+),\]
where $(\mathchar`-)_+$ denotes the functor $\mathbf{Set}^{t\Delta^{op}}\to\mathbf{Set}_*^{t\Delta^{op}}$ adding the base points.
\end{definition}

\begin{definition}
(1) A prestratified prespectrum $L$ consists of

(i) a sequence of pointed prestratified simplicial sets $L_i$ with $i\in\mathbb{N}$,

(ii) a sequence of monomorphisms $\lambda_i:\operatorname{S}L_i\to L_{i+1}$ of pointed prestratified simplicial sets with $i\in\mathbb{N}$.

(2) A morphism $\psi:\{L_i, \lambda_i\}\to \{M_i, \mu_i\}$ of prestratified prespectra is a sequence of morphisms $\psi_i:L_i\to M_i$ of pointed prestratified simplicial sets such that $\psi_{i+1}\circ\lambda_i=\mu_i\circ\operatorname{S}\psi_i$.

(3) A morphism $\psi:\{L_i, \lambda_i\}\to \{M_i, \mu_i\}$ of prestratified prespectra is called a weak equivalence if for every $i\in\mathbb{N}$, $\psi_i$ is a weak equivalence for the pointed Ozornova-Rovelli model structure. 
\end{definition}

\begin{definition}Let $X\in\mathbf{Set}_*^{t\Delta_{st}^{op}}$.
We define the corresponding prestratified prespectrum $\operatorname{Ps}(X)=\{X_i, \xi_i\}$ as follows.

For any $i\in\mathbb{N}$, the pointed set of $n$-simplices in $X_i$ is given by
\[X_i([n]):=\{\alpha\in X([n-i])|d_0\cdots d_n\alpha=*, d_j\alpha=*,   (j\geq n)\},\]
where $*$ denotes (the degeneracy of) the base point.
Similarly, the pointed set of marked $n$-simplices in $X_i$ is given by
\[X_i([n]_t):=\{\alpha\in X([n-i]_t)|d_0\cdots d_n\varphi^*\alpha=*, d_j\varphi^*\alpha=*,   (j\geq n)\},\]
The structure morohisms on $X_i$ will be induced by those of $X$ and $X_i$ is indeed a pointed prestratified simplicial set with them.

For any $i\in\mathbb{N}$, the monomorphism $\xi_i:\operatorname{S}X_i\to X_{i+1}$ is given by $\xi_i(\alpha, \phi)=\alpha$.
\end{definition}

\begin{rem}This defines a functor $\operatorname{Ps}:\mathbf{Set}_*^{t\Delta_{st}^{op}}\to\mathbf{ppSp}$, by letting $\mathbf{ppSp}$ denote the category of prestratified prespectra..
By construction, for any $i\in\mathbb{N}$ $X_i$ is a stratified simplicial set if $X$ is a stratified stable simplicial set.
\end{rem}

Brown has introduced the following notion in the study of shaves valued in simplicial sets.
\begin{definition}[\cite{B}]
Let $\mathcal{C}$ be a category with finite products and a final object denoted by $*$.
Assume that $\mathcal{C}$ has two distinguished classes of morphisms, called the weak equivalences and the fibrations.
A morphism in $\mathcal{C}$ will be called an aspherical fibration if it is both a weak equivalence and a fibration.
We define a path object for an object $B$ in $\mathcal{C}$ to be an object $B^I$ together with morphisms
\[B\xrightarrow{s} B^I\xrightarrow{(d_0, d_1)} B\times B,\]
where $s$ is a weak equivalence, $(d_0, d_1)$ is a fibration, and the composite is the diagonal morphism.

We call $\mathcal{C}$ a category of fibrant objects if the following are satisfied

(A) Let $f$ and $g$ be morphisms such that $gf$ is defined.
If two of $f$, $g$, $gf$ are weak equivalences then so is the third.
Any isomorphism is a weak equivalence.

(B) The composite of two fibrations is a fibration.
Any isomorphism is a fibration.

(C) For any morphism $A\to C$ and a fibration (resp. aspherical fibration) $B\to C$, the pullback $A\times_{C}B$ exists and the projection $A\times_{C}B\to A$ is a fibration (resp. aspherical fibration).

(D) For any object, there exists at least one path object.

(E) For any object $B$ the morphism $B\to*$ is a fibration.

\end{definition}

We define weak equivalences and fibrations in $\mathbf{Set}_*^{t\Delta_{st}^{op}}$.

\begin{definition}\label{wf}
A morphism $f$ in $\mathbf{Set}_*^{t\Delta_{st}^{op}}$ is 
\begin{enumerate}
\item a weak equivalence if for any $i\in\mathbb{N}$ the morphism $\operatorname{Ps}(f)_i$ of pointed prestratified simplicial sets is a weak equivalence for Ozornova-Rovelli model structure,
\item a fibration if for any $i\in\mathbb{N}$ the morphism $\operatorname{Ps}(f)_i$ of pointed prestratified simplicial sets is a fibration for Ozornova-Rovelli model structure.
\end{enumerate}
We call a prestratified stable simplicial set $X$ is fibrant if the unique morphism $X\to *$ is a fibration.
\end{definition}

\begin{prop}For any fibrant stratified stable simplicial set $B$,  there exists the diagram of fibrant objects
\[B\xrightarrow{s}\operatorname{Path}(B)\xrightarrow{(d_0, d_1)}B\times B,\]
where $s$ is a weak equivalence, $(d_0, d_1)$ is a fibration, and the composite is the diagonal morphism.
\end{prop}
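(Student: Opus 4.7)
The plan is to reduce this to the existence of path objects in the pointed Ozornova-Rovelli model structure on $\mathbf{Set}_*^{t\Delta^{op}}$. By Definition \ref{wf}, fibrations and weak equivalences of prestratified stable simplicial sets are detected termwise by $\operatorname{Ps}$, so it suffices to construct $\operatorname{Path}(B)\in\mathbf{Set}_*^{t\Delta_{st}^{op}}$ together with morphisms $s:B\to\operatorname{Path}(B)$ and $(d_0,d_1):\operatorname{Path}(B)\to B\times B$ factoring the diagonal, such that for each $i\in\mathbb{N}$ the pointed prestratified simplicial set $\operatorname{Ps}(\operatorname{Path}(B))_i$ is a path object for $\operatorname{Ps}(B)_i$, which is fibrant by hypothesis.

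For the construction I would cotensor $B$ with the marked $1$-simplex $\Delta[1]_t$. Using the natural enrichment of $\mathbf{Set}_*^{t\Delta_{st}^{op}}$ over $\mathbf{Set}_*^{t\Delta^{op}}$ obtained from the pairing between representables in $t\Delta_{st}$ and in $t\Delta$, I set $\operatorname{Path}(B) := B^{(\Delta[1]_t)_+}$, where $(\mathchar`-)_+$ is the basepoint-adding functor as in the preceding subsection. The two endpoint inclusions $\Delta[0]\hookrightarrow\Delta[1]_t$ at $0$ and $1$ induce $d_0, d_1:\operatorname{Path}(B)\to B$, and the projection $\Delta[1]_t\to\Delta[0]$ induces $s:B\to\operatorname{Path}(B)$, so that $(d_0,d_1)\circ s$ is the diagonal by functoriality. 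The face, degeneracy, thinning $\varphi$, and $\zeta$-operators on $\operatorname{Path}(B)$, along with the marked/unmarked bookkeeping, are all inherited from $B$ via the cotensor structure.

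The verification of the three conditions then reduces to standard properties of the Ozornova-Rovelli model structure, applied termwise. The inclusion $\partial\Delta[1]_t\hookrightarrow\Delta[1]_t$ is a monomorphism, hence a cofibration between cofibrant objects, so together with fibrancy of $\operatorname{Ps}(B)_i$ the pushout-product axiom yields that the induced map $\operatorname{Ps}(B)_i^{(\Delta[1]_t)_+}\to\operatorname{Ps}(B)_i\times\operatorname{Ps}(B)_i$ is a fibration. The inclusion $\Delta[0]\hookrightarrow\Delta[1]_t$ coincides with the anodyne extensions $\Lambda^k[1]\to\Delta^k[1]$ for $k\in\{0,1\}$ noted earlier in the paper, hence is a trivial cofibration; cotensoring into a fibrant object then makes $s$ a weak equivalence. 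Pulling these termwise statements back through Definition \ref{wf} gives the required path-object axioms.

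The main obstacle is the identification $\operatorname{Ps}(\operatorname{Path}(B))_i\cong\operatorname{Ps}(B)_i^{(\Delta[1]_t)_+}$, compatibly with the spectrum structure maps $\xi_i:\operatorname{S}X_i\to X_{i+1}$. This requires careful bookkeeping of the vanishing conditions $d_0\cdots d_n\alpha = *$ and $d_j\alpha = *$ for $j\geq n$, together with their marked analogues $d_0\cdots d_n\varphi^*\alpha = *$ and $d_j\varphi^*\alpha = *$, as they interact with the cotensor in both the unmarked and marked sectors; once this identification is in hand, the remainder of the argument is routine.
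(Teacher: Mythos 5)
Your construction---cotensoring $B$ with the marked $1$-simplex, detecting the path-object conditions termwise through $\operatorname{Ps}$, and using that $\Delta[0]\to\Delta[1]_t$ is an elementary anodyne extension together with fibrancy of each $\operatorname{Ps}(B)_i$---is essentially the paper's own proof, which sets $\operatorname{Path}(B)=B^{\Sigma_+^{\infty}\Delta[1]_t}$ and works with the levelwise mapping objects $(B_i)^{\Delta[1]_t}$. The identification $\operatorname{Ps}(\operatorname{Path}(B))_i\cong (B_i)^{\Delta[1]_t}$ that you defer is likewise asserted ``by construction'' in the paper (with structure maps $\tilde{\beta}_i$ obtained from $\operatorname{S}((B_i)^{\Delta[1]_t})\to(\operatorname{S}B_i)^{\Delta[1]_t}$ and $\beta_i$), and the weak-equivalence claim for $s$ is obtained there by two-out-of-three, exactly as your argument implicitly requires.
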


\begin{proof}
Let $B$ be a fibrant stratified stable simplicial set.
Then we have a prestratified prespectrum $\operatorname{Ps}(B)=\{B_i, \beta_i\}$, where $B_i$ is a pointed stratified simplicial set for any $i\in\mathbb{N}$.
For any $i\in\mathbb{N}$, we have the mapping stratified simplicial set $(B_i)^{\Delta[1]_t}$ that is the closure of the cartesian product.
There is an evident injection $\operatorname{S}((B_i)^{\Delta[1]_t})\to(\operatorname{S}(B_i))^{\Delta[1]_t}$. 
By composing this and the morphism $(\operatorname{S}(B_i))^{\Delta[1]_t}\to(B_{i+1})^{\Delta[1]_t}$ induced by $\beta_i$, we obtain $\tilde{\beta}_i:\operatorname{S}((B_i)^{\Delta[1]_t})\to(B_{i+1})^{\Delta[1]_t}$, which defines a prestratified prespectrum $\{(B_i)^{\Delta[1]_t}, \tilde{\beta}_i\}$.

Since the morphisms $\Delta[0]\to\Delta[1]_t$ are elementary anodyne extensions and $B_i$ is fibrant, the induced morphism $(B_i)^{\Delta[1]_t}\to B_i\times B_i$ is an acyclic fibration.

We put $\operatorname{Path}(B)$ to be the mapping stratified stable simplicial set $B^{\Sigma_+^{\infty}\Delta[1]t}$, where $\Sigma_+^{\infty}\Delta[1]_t$ is the stratified stable simplicial set corresponding to $\Delta[1]_t$.
Then by construction, we have $\operatorname{Ps}(\operatorname{Path}(B))=\{(B_i)^{\Delta[1]_t}, \tilde{\beta}_i\}$.
The morphism $s:B\to \operatorname{Path}(B)$ corresponds to the constant path and it is a weak equivalence by the two out of three axiom. 
\end{proof}

\begin{prop} Let $A, B, C$ be fibrant stratified stable simplicial sets. 
For any morphism $A\to C$ and a fibration (resp. aspherical fibration) $B\to C$, the pullback $A\times_{C}B$ is again a fibrant stratified stable simplicial set and the projection $A\times_{C}B\to A$ is a fibration (resp. aspherical fibration).
\end{prop}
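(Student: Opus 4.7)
The plan is to reduce the proposition to the corresponding statement in the Ozornova--Rovelli model structure, levelwise via the functor $\operatorname{Ps}$. Specifically, I will show that (i) pullbacks exist in $\mathbf{Set}_*^{t\Delta_{st}^{op}}$ and are computed pointwise, (ii) the functor $\operatorname{Ps}:\mathbf{Set}_*^{t\Delta_{st}^{op}}\to\mathbf{ppSp}$ preserves these pullbacks termwise, and (iii) (acyclic) fibrations in the pointed Ozornova--Rovelli model structure are stable under pullback. Together with Definition \ref{wf}, these three facts yield the proposition immediately.

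For (i), since $\mathbf{Set}_*^{t\Delta_{st}^{op}}$ is a functor category into a complete category, all small limits exist and are formed objectwise. In particular, $(A\times_C B)([n]_?)=A([n]_?)\times_{C([n]_?)}B([n]_?)$ for each $[n]_?\in t\Delta_{st}$. For (ii), recall that $\operatorname{Ps}(X)_i$ is carved out of $X([n-i]_?)$ by the equations $d_0\cdots d_n\alpha=*$ and $d_j\alpha=*$ for $j\geq n$ (with the analogous condition involving $\varphi^*$ in the marked case). These conditions are preserved and reflected by the pullback projections, because the face operators and base points are computed componentwise. Hence the natural comparison map
\[\operatorname{Ps}(A\times_C B)_i\longrightarrow \operatorname{Ps}(A)_i\times_{\operatorname{Ps}(C)_i}\operatorname{Ps}(B)_i\]
is an isomorphism of pointed prestratified simplicial sets for every $i\in\mathbb{N}$.

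For (iii), by the Ozornova--Rovelli theorem recalled in the excerpt, the category of pointed prestratified simplicial sets carries a model structure; in any model category, (acyclic) fibrations are stable under pullback along arbitrary morphisms. Applying this to the pullback diagram obtained in (ii), the projection $\operatorname{Ps}(A\times_C B)_i\to \operatorname{Ps}(A)_i$ is a fibration (respectively an acyclic fibration) whenever $\operatorname{Ps}(B)_i\to\operatorname{Ps}(C)_i$ is. By Definition \ref{wf}, this is precisely the statement that $A\times_C B\to A$ is a fibration (respectively an aspherical fibration) in $\mathbf{Set}_*^{t\Delta_{st}^{op}}$. Finally, the composite $A\times_C B\to A\to *$ is a composition of fibrations, so $A\times_C B$ is fibrant, completing the proof.

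The only place where real verification is needed is (ii): one has to check that the subset conditions defining $\operatorname{Ps}(X)_i$ behave correctly under pullbacks. This is routine since the conditions are purely pointwise and involve only the face maps and the base point, both of which are strictly preserved by the projections from a levelwise pullback; I expect no subtlety to arise from the additional marked-simplex condition involving $\varphi^*$, since $\varphi^*$ is also computed componentwise.
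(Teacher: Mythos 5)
Your proposal is correct and follows essentially the same route as the paper: compute the pullback levelwise, observe that $\operatorname{Ps}(A\times_C B)_i\cong \operatorname{Ps}(A)_i\times_{\operatorname{Ps}(C)_i}\operatorname{Ps}(B)_i$, and invoke stability of (acyclic) fibrations under pullback in the (pointed) Ozornova--Rovelli model structure, with Definition \ref{wf} translating this back to stable prestratified simplicial sets. The extra verification you flag in step (ii) and the closure-under-composition argument for fibrancy are exactly the details the paper leaves implicit.
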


\begin{proof} By the definition of the functor $\operatorname{Ps}$, we have \[(A\times_{C}B)_i=A_i\times_{C_i}B_i\] for any $i\in\mathbb{N}$.
Since any fibration (resp. aspherical fibration) in Ozornova-Rovelli model structure is preserved by pullback, $(A\times_{C}B)_i\to A_i$ is a fibration for any $i\in\mathbb{N}$.
This completes the proof.
\end{proof}

\begin{cor}The full subcategory of fibrant stratified stable simplicial sets in $\mathbf{Set}_*^{t\Delta_{st}^{op}}$ admits a structure of a category of fibrant objects with the weak equivalences and fibrations in Definition \ref{wf}.
\end{cor}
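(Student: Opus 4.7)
The plan is to verify the five axioms (A)--(E) of a category of fibrant objects for the full subcategory of fibrant objects in $\mathbf{Set}_*^{t\Delta_{st}^{op}}$. Axioms (C) and (D) are exactly the two propositions immediately preceding the corollary, while axiom (E) holds by construction: every object of the subcategory is fibrant by definition, and the terminal object $*\in\mathbf{Set}_*^{t\Delta_{st}^{op}}$ satisfies $\operatorname{Ps}(*)_i=*$, which is fibrant in the pointed Ozornova--Rovelli model structure. What remains is to check (A), (B), and the existence of finite products inside the subcategory.

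For binary products I would invoke axiom (C) with $C=*$: the pullback $A\times_* B=A\times B$ exists and is again fibrant, and the projections are fibrations. Combined with the terminal object $*$, this supplies the finite products required to form the diagonals appearing in the definition of a path object.

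For axiom (A), I would use functoriality of $\operatorname{Ps}:\mathbf{Set}_*^{t\Delta_{st}^{op}}\to\mathbf{ppSp}$: for composable morphisms $f,g$ one has $\operatorname{Ps}(gf)_i=\operatorname{Ps}(g)_i\circ\operatorname{Ps}(f)_i$ in the pointed Ozornova--Rovelli model structure, which satisfies two-out-of-three. Applying this level by level transfers two-out-of-three to the weak equivalences of Definition \ref{wf}. Isomorphisms in $\mathbf{Set}_*^{t\Delta_{st}^{op}}$ are sent by $\operatorname{Ps}$ to level-wise isomorphisms and are therefore weak equivalences. Axiom (B) is verified in the same fashion: a composite of fibrations is a level-wise composite of Ozornova--Rovelli fibrations and hence a fibration, and isomorphisms become level-wise isomorphisms and therefore level-wise fibrations.

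I do not anticipate a substantive obstacle. The real content is concentrated in the preceding two propositions (construction of the path object $B^{\Sigma_+^\infty\Delta[1]_t}$ and the stability of fibrations and acyclic fibrations under pullback); once those are in hand, axioms (A), (B), (E) and the existence of finite products reduce mechanically, via $\operatorname{Ps}$ applied degree by degree, to the corresponding standard facts for the pointed Ozornova--Rovelli model structure.
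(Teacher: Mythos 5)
Your proposal is correct and follows exactly the route the paper intends: the corollary is stated without proof precisely because axioms (C) and (D) are the two preceding propositions, while (A), (B), (E) and finite products reduce levelwise through $\operatorname{Ps}$ to the corresponding properties of the pointed Ozornova--Rovelli model structure, just as you argue.
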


\begin{rem}As is mentioned above, it is shown that the category of combinatorial spectra admits a model structure in \cite{B} by using the cofibrantly generated classical model structure on simplicial sets.
It might be possible to obtain the stratified analogue by using the cofibrantly generated model structure on $\mathbf{msSet}$ given in \cite{OR}.
\end{rem}

\begin{thm}
Suppose we have weak equivalences $f:X\to X'$ and $g:Y\to Y'$ of stratified stable simplicial sets. %fibrant?
Then the morphism $f\oplus g:X\tilde{\wedge} Y\to X'\tilde{\wedge}Y'$ is also a weak equivalence.
\end{thm}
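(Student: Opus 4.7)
The plan is to mirror the strategy of Kan--Whitehead by passing through the prespectrum functor $\operatorname{Ps}$ and reducing the claim to the preservation statement for the join $\oplus$ on prestratified simplicial sets already established in Corollary \ref{1}. By Definition \ref{wf}, a map in $\mathbf{Set}_*^{t\Delta_{st}^{op}}$ is a weak equivalence exactly when $\operatorname{Ps}$ produces a levelwise weak equivalence in the pointed Ozornova--Rovelli model structure, so it suffices to show that for every $n\in\mathbb{N}$ the morphism $\operatorname{Ps}(f\tilde{\wedge}g)_n$ is a weak equivalence of pointed prestratified simplicial sets. Using the bifunctoriality $f\tilde{\wedge}g=(f\tilde{\wedge}\operatorname{id}_{Y'})\circ(\operatorname{id}_X\tilde{\wedge}g)$ and the two-out-of-three property, I would further reduce to the case where one of $f,g$ is the identity.

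The core computation is to produce a natural identification, levelwise in the prespectrum direction, between $\operatorname{Ps}(X\tilde{\wedge}Y)_n$ and a join of truncations of $\operatorname{Ps}(X)$ and $\operatorname{Ps}(Y)$. Concretely, by unpacking the left Kan extension defining $X\wedge' Y$ along $\star\circ(\operatorname{rev}\times\operatorname{id})$, the truncation $(-)_{-1}$, and the level formula for $\operatorname{Ps}(-)_n$, a pointed (marked or unmarked) $n$-simplex of $\operatorname{Ps}(X\tilde{\wedge}Y)_n$ is represented by a pair of simplices of $X$ and $Y$ of complementary degree subject to the face/vanishing conditions that define $X_i$ and $Y_{n-1-i}$. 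The combinatorics of $\star:t\Delta\times t\Delta\to t\Delta$ (particularly the rules involving $\varphi$ and $\zeta^j$) exactly match the marking rules in the definition of $\oplus$, so one should obtain a natural isomorphism of the form
\[
\operatorname{Ps}(X\tilde{\wedge}Y)_n\;\cong\;\bigvee_{i+j=n-1}X_i\oplus Y_j\big/\sim,
\]
with the quotient collapsing basepoint pieces and identifying simplices along the suspension structure maps $\xi_i$.

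Once this levelwise description is in place, the map $\operatorname{Ps}(f\tilde{\wedge}g)_n$ is assembled from joins $f_i\oplus g_j$, each of which is a weak equivalence in the pointed Ozornova--Rovelli model structure by Corollary \ref{1} (the join is compatible with adding disjoint basepoints). Since monomorphisms (and in particular the wedge inclusions) are cofibrations and every object is cofibrant, the wedge/colimit of these levelwise weak equivalences remains a weak equivalence, giving the desired conclusion.

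The main obstacle I expect is the explicit verification of the levelwise formula for $\operatorname{Ps}(X\tilde{\wedge}Y)_n$, and in particular the matching of the marking datum. The extension $\star:t\Delta\times t\Delta\to t\Delta$ has several asymmetric clauses for $\varphi$ and $\zeta^j$, and one must check that the marked simplices produced by the Kan extension and then cut down by $(-)_{-1}$ and by the boundary vanishing conditions in $\operatorname{Ps}(-)_n$ agree on the nose with the marked simplices of the joins $X_i\oplus Y_j$. A secondary technical point is that, unlike Kan--Whitehead's use of the free group functor $\operatorname{F}$ to ensure Kan fibrancy, here one may need to fibrantly replace $X$ and $Y$ levelwise in $\mathbf{msSet}$ before running the argument; this is harmless since fibrant replacement is a weak equivalence and Corollary \ref{1} applies to all stratified simplicial sets.
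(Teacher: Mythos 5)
Your proposal is correct and follows essentially the same route as the paper: reduce via $\operatorname{Ps}$ and Definition \ref{wf} to a levelwise statement, identify each level $(X\tilde{\wedge}Y)_i$ with a join of pointed stratified simplicial sets built from $X$ and $Y$, and conclude by Corollary \ref{1}. The only notable difference is that the paper's levelwise identification is simpler than the one you guess: the vanishing conditions imposed by $(-)_{-1}$ and by $\operatorname{Ps}(-)_i$ constrain the two factors separately (each condition depends only on the degree of its own factor), so each level is a single join on the nose, and no wedge over splittings, no quotient along suspension maps, no colimit/cofibrancy argument, and no fibrant replacement or two-out-of-three reduction is needed.
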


\begin{proof}
By definition, for $i, n\in\mathbb{N}$, we observe that $(X\tilde{\wedge}Y)_i([n])=\{(x, y)\in(X\tilde{\oplus} Y)([n-i+1])|d_1\cdots d_{n+1}(x, y)=*, d_j(x, y)=* (j\leq 0 \ {\rm or} \ n+2\leq j)\}$, where $d$'s are the structure morphisms of $X\tilde{\oplus} Y$.
Thus, $(X\tilde{\wedge}Y)_i$ is the join of two pointed stratified simplicial sets.
Combining this observation and Corollary \ref{1}, we obtain the desired result.
\end{proof}

\begin{rem}
We do not know whether $\tilde{\wedge}$ is a product on the full subcategory of fibrant stratified stable simplicial sets.

Let $\operatorname{F}:\mathbf{Set}_*^{t\Delta^{op}}\to\mathbf{Set}_*^{t\Delta^{op}}$ denote a fibrant replacement functor with respect to the pointed Ozornova-Rovelli model structure.
For any prestratified prespectrum $\{X_i, \xi_i\}$, we have fibrant pointed prestratified set $\operatorname{F}(X_i)$ for all $i\in\mathbb{N}$.
However there may not exsist the structure morphism $\operatorname{S}\wedge\operatorname{F}(X_i)\to\operatorname{F}(X_{i+1})$, which exists up to homotopy.
\end{rem}

\subsection{Reedy-like structure}
In this article, we only investigate prestratified stable simplicial objects in the category of (pointed) sets.
It would be worth studying such objects in other ($\infty$-)categories equipped with own homotopy theories.

It is well known that $\Delta$ is a Reedy category and is shown in \cite{OR} $t\Delta$ is also a Reedy category.
Moreover, it is shown there that these two categories have better properties, that is to say, they are regular skeletal.
In this section, we observe $\Delta_{st}$ and $t\Delta_{st}$ also have similar properties.

\begin{definition}[\cite{Hir}]A Reedy category is a category $\mathcal{R}$ equipped with two subcategories $\mathcal{R}_+$ and $\mathcal{R}_-$, both of which contain all the objects, and a total ordering on the set $\operatorname{ob}(\mathcal{R})$ of objects, defined by a degree function $\operatorname{deg}:\operatorname{ob}(\mathcal{R})\to\mathbb{N}$ such that:
\begin{itemize}
\item Every nonidentity morphism in $\mathcal{R}_+$ raises degree,
\item Every nonidentity morphism in $\mathcal{R}_-$ lowers degree, and
\item Every morphism $f$ in $\mathcal{R}$ factors uniquely as a morphism in $\mathcal{R}_-$ followed by a morphism in $\mathcal{R}_+$.
\end{itemize}
\end{definition}

As is well known, $\Delta$ is a Reedy category with the following structure:
\[\operatorname{deg}:\operatorname{ob}(\Delta)\to\mathbb{N}, [n]\mapsto n.\]
The subcategories $\Delta_+$ and $\Delta_-$ consist of injective maps and surjective maps respectively. 

\begin{thm}[\cite{OR}] The category $t\Delta$ is a Reedy category with the following structure:
The degree map $\operatorname{deg}:\operatorname{ob}(t\Delta)\to\mathbb{N}$ is given by
\[\operatorname{deg}([0])=0, \hspace{0.2em} \operatorname{deg}([k])=2k-1, \hspace{0.2em} \operatorname{deg}([k]_t)=2k, \hspace{0.2em} k\geq1.\]
The subcategory $t\Delta_+$ is generated by $\Delta_+$ and morphisms $\varphi:[n]\to[n]_t$ for all $n\geq1$ and $t\Delta_-$ is generated by $\Delta_-$ and morphisms $\zeta^i:[n+1]_t\to[n]$ for all $n\geq1$ and $0\leq i\leq n$.
\end{thm}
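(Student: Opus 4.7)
The plan is to verify the three Reedy axioms for the proposed data by first checking the degree condition on generators, then establishing a normal form that yields existence of the factorization, and finally using a forgetful functor to $\Delta$ to deduce uniqueness. A direct calculation on generators gives: $d^i:[n-1]\to[n]$ raises the degree by $2$ (by $1$ if $n=1$), $\varphi:[n]\to[n]_t$ raises it by $1$, $s^i:[n+1]\to[n]$ lowers it by $2$ (by $1$ if $n=0$), and $\zeta^i:[n+1]_t\to[n]$ lowers it by $3$ (by $2$ if $n=0$). Hence every nonidentity morphism in $t\Delta_+$ strictly raises the degree and every nonidentity morphism in $t\Delta_-$ strictly lowers it, while both subcategories contain all identities and hence all objects.

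For existence of the factorization, I would exploit the relation $\zeta^i\varphi = s^i$ to collapse any interior occurrence of $\zeta^i$ immediately followed by $\varphi$ in a word in the generators. After applying this collapse as much as possible, any morphism $f:[m]_?\to[n]_?$ takes a normal form with at most one $\varphi$ at the left (present exactly when the target is $[n]_t$) and at most one $\zeta^i$ at the right (present exactly when the source is $[m]_t$), sandwiching a morphism in $\Delta$. Applying the classical epi-mono factorization to this middle $\Delta$-part, and absorbing the optional $\varphi$ into the injective half and the optional $\zeta^i$ into the surjective half, produces a factorization $f = f_+\circ f_-$ through an unmarked object, with $f_+\in t\Delta_+$ and $f_-\in t\Delta_-$, since the generators of $t\Delta_-$ can only target unmarked objects and those of $t\Delta_+$ can only start from unmarked objects.

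For uniqueness, I would introduce the forgetful functor $U:t\Delta\to\Delta$ fixing $d^i$ and $s^i$, sending $\varphi$ to the identity and $\zeta^i$ to $s^i$. This is well-defined since $\zeta^i\varphi = s^i$ becomes the trivial equality $s^i = s^i$ and $s^i\zeta^{j+1} = s^j\zeta^i$ becomes the simplicial identity $s^is^{j+1} = s^js^i$ after applying $U$. Applying $U$ to the factorization $f = f_+\circ f_-$ recovers the epi-mono factorization of $U(f)$, which in turn determines the intermediate object $[k]$, the injective part $\delta = U(f_+)$ (and hence $f_+$, since whether to post-compose with $\varphi$ is dictated by the target of $f$), and the surjective part $U(f_-)$. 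It remains to show that the map $t\Delta_-([m]_t,[k])\to\Delta_-([m],[k])$, $\tau\mapsto U(\tau)$, is a bijection; surjectivity is immediate from the normal form, and I expect injectivity to be the main obstacle. This reduces to the observation that the family of relations $s^i\zeta^{j+1} = s^j\zeta^i$ in $t\Delta_-$ is an exact mirror of the family $s^is^{j+1} = s^js^i$ in $\Delta_-$, so that any sequence of simplicial identities witnessing $U(\tau_1) = U(\tau_2)$ lifts verbatim to a sequence of identities in $t\Delta$ witnessing $\tau_1 = \tau_2$.
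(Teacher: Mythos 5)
The paper itself offers no proof of this theorem: it is imported verbatim from \cite{OR}, so there is nothing internal to compare against, and your argument has to stand on its own — which it essentially does. The degree bookkeeping on the generators is correct; collapsing every interior subword $\zeta^i\varphi$ to $s^i$ does yield the normal form (optional $\varphi$)$\circ$($\Delta$-morphism)$\circ$(optional $\zeta^i$), since $\varphi$ is the only generator with marked target and $\zeta^i$ the only generator with marked source, and this gives existence of a factorization through an unmarked object; the retraction $U:t\Delta\to\Delta$ is well defined on the relations exactly as you check, and applying it to \emph{any} $(t\Delta_-,t\Delta_+)$-factorization of a nonidentity morphism (which must itself pass through an unmarked object, by the same source/target observation you use for existence — worth stating explicitly in the uniqueness step, along with the trivial case of identities of marked objects) recovers the unique epi--mono factorization in $\Delta$, which pins down the intermediate object and, since $U(\varphi\circ\delta)=\delta$, also the plus part; note that $U$ being a retraction of $\Delta\subset t\Delta$ gives you faithfulness of that inclusion for free, so no separate monicity of $\varphi$ is needed. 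The one step to shore up is the injectivity of $U$ on $t\Delta_-([m]_t,[k])$, which you correctly identify as the crux: the ``lifts verbatim'' argument requires the classical fact that the subcategory $\Delta_-$ of surjections is presented by the $s^i$ subject \emph{only} to $s^js^i=s^is^{j+1}$ ($i\le j$), so that an equality $\sigma s^i=\tau s^j$ is witnessed by a zig-zag of these degeneracy relations alone, never passing through words containing $d$'s (such a detour would have no mirror under the replacement of the last letter $s^i$ by $\zeta^i$); granting that standard fact (it follows from the unique normal form for surjections), each elementary step lifts either inside the prefix or, at the last letter, via $s^a\zeta^{b+1}=s^b\zeta^a$, and your proof is complete. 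This route, organized around the retraction $U$ and a presentation argument, is a reasonable alternative to the explicit description of $t\Delta$-morphisms by normal forms on which the verification in \cite{OR} is based.
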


\begin{definition}[\cite{OR}]\label{regske}A Reedy category $\mathcal{R}$ is regular skeletal if the following conditions hold.

(1) Every morphism in $\mathcal{R}_-$ admits a section.

(2) Two parallel morphisms of $\mathcal{R}_-$ are equal if and only if they admit the same set of sections.

(3) Every morphism of $\mathcal{R}_+$ is a monomorphism.
\end{definition}

It is easy to show that $\Delta$ is regular skeletal.
Furthermore, Ozornova and Rovelli have shown the following.
\begin{thm}[\cite{OR}]The Reedy category $t\Delta$ is regular skeletal.
\end{thm}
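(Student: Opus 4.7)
The plan is to verify the three axioms of Definition \ref{regske} for $t\Delta$, leveraging the fact that $\Delta$ itself is regular skeletal. Recall that $t\Delta_+$ is generated by $\Delta_+$ together with $\varphi:[n]\to[n]_t$ for $n\geq 1$, while $t\Delta_-$ is generated by $\Delta_-$ together with $\zeta^i:[n+1]_t\to[n]$.

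First I would dispatch axiom (3): every morphism of $t\Delta_+$ is a monomorphism. Maps in $\Delta_+$ are monomorphisms in $\Delta$ by regular skeletality of $\Delta$, and since $\Delta$ embeds into $t\Delta$ in the evident way, they remain monomorphisms there. The generator $\varphi:[n]\to[n]_t$ is also a monomorphism, which can be checked by inspecting the possible sources: any two parallel maps equalised after post-composing with $\varphi$ must already agree, because $\varphi$ is essentially the identity on the underlying $\Delta$-object and just tags it as thin. Since compositions of monomorphisms are monomorphisms, (3) follows.

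For axiom (1), each generator of $t\Delta_-$ admits an explicit section: $s^i$ has the section $d^i$ inherited from $\Delta$, and $\zeta^i:[n+1]_t\to[n]$ has $\varphi\circ d^i:[n]\to[n+1]_t$ as a section, since by the relation $\zeta^j\varphi = s^j$ we compute $\zeta^i\circ\varphi\circ d^i = s^i\circ d^i = \operatorname{id}$. For an arbitrary morphism of $t\Delta_-$, I would argue by normal form: using the defining relations between $\zeta^i$ and $s^j$, any morphism of $t\Delta_-$ factors as either $\sigma$ or $\sigma\circ\zeta^j$ with $\sigma\in\Delta_-$ (according to whether the source is unmarked or marked), and sections of the composite assemble from sections of each factor.

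The main obstacle will be axiom (2). When the common source is unmarked, the claim reduces directly to the regular skeletal property of $\Delta$, using that any section in $t\Delta$ of a map $\sigma\in\Delta_-$ must itself factor through $\Delta$ by marking considerations. When the source is marked, I would use the normal form $\sigma\circ\zeta^j$ and recover both $\sigma$ and the index $j$ from the section data. Concretely, post-composing a pair of candidate sections with $\varphi$ converts the question of distinguishing $\sigma_1\circ\zeta^{j_1}$ from $\sigma_2\circ\zeta^{j_2}$ into distinguishing $\sigma_1\circ s^{j_1}$ from $\sigma_2\circ s^{j_2}$ in $\Delta$, which reduces $\sigma_1=\sigma_2$ to the $\Delta$-case; then separating $j_1$ from $j_2$ is done by choosing a specific section whose images under the two maps differ. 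This bookkeeping around the relation $\zeta^j\varphi = s^j$, combined with uniqueness of the leading $\zeta$-factor in the normal form, is the most delicate step of the argument.
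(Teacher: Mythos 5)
The paper itself gives no proof of this theorem: it is quoted from \cite{OR} (the relevant verification is their analysis of $t\Delta$, cf.\ the appendix referred to later as Proposition C.4), so your proposal has to stand on its own. Your treatment of axioms (1) and (3) is essentially fine, modulo one gloss: a monomorphism of $\Delta$ does not ``remain a monomorphism'' in $t\Delta$ merely because $\Delta$ embeds there; one must test against maps out of the marked objects $[k]_t$, and that test requires exactly the description of the hom-sets $t\Delta([k]_t,[m])$ that you postpone. The genuine gap is in axiom (2) for a marked source.

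Your plan there rests on ``uniqueness of the leading $\zeta$-factor in the normal form'' and on ``recovering the index $j$ from the section data''. Both are false: the defining relation $s^i\zeta^{j+1}=s^j\zeta^i$ identifies expressions with different leading indices, e.g.\ $s^0\zeta^0=s^0\zeta^1\colon[2]_t\to[0]$, so the index $j$ in $\sigma\circ\zeta^j$ is not an invariant of the morphism and no section can ``separate $j_1$ from $j_2$'' (nor is that needed). What your reduction actually gives is the following: every map $[k]\to[m]_t$ is of the form $\varphi\beta$ with $\beta\in\Delta([k],[m])$, and $\sigma\zeta^{j}\varphi\beta=\sigma s^{j}\beta$, so the sections of $\sigma\zeta^{j}$ correspond to the sections of $\sigma s^{j}$ in $\Delta$; hence equal section sets force $\sigma_1 s^{j_1}=\sigma_2 s^{j_2}$ in $\Delta$ by the regular skeletality of $\Delta$. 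To conclude you still must show that $\sigma_1 s^{j_1}=\sigma_2 s^{j_2}$ implies $\sigma_1\zeta^{j_1}=\sigma_2\zeta^{j_2}$ in $t\Delta$, i.e.\ that precomposition with $\varphi$ is injective on $t\Delta([m]_t,[k])$ (equivalently, that $\varphi$ is an epimorphism, equivalently that the hom-sets out of $[m]_t$ into unmarked objects are identified with the non-injective maps of $\Delta$ via $\gamma\zeta^i\mapsto\gamma s^i$). That injectivity is the real combinatorial content of the theorem and is precisely what Ozornova--Rovelli establish by analysing the relation $s^i\zeta^{j+1}=s^j\zeta^i$; your outline replaces it by the false uniqueness claim, so the decisive step is missing.
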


In \cite{OR}, that $t\Delta$ admits is a regular skeletal Reedy category with this structure plays a pivotal role.
We will show that $t\Delta_{st}$ is {\it almost} a regular skeletal Reedy category with an analogous structure.
To do that, we show that $\Delta_{st}$ is also almost regular skeletal.

We consider the following structure on $\Delta_{st}$:
$\operatorname{deg}:\operatorname{ob}(\Delta_{st})\to\mathbb{Z}, [n]\mapsto n$
The subcategories $(\Delta_{st})_-$ (resp. $(\Delta_{st})_+$) is generated by identity morphisms and $s^i$'s (resp. identity morphisms and $d^i$'s).

\begin{lem}(1) For any $i$ and $n$, the morphism $s^i:[n+1]\to[n]$ in $\Delta_{st}$ is an epimorphism.

(2) For any $i$ and $n$, the morphism $d^i:[n-1]\to[n]$ in $\Delta_{st}$ is an monomorphism.
\end{lem}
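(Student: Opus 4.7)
The plan is to observe that both claims are immediate consequences of the cosimplicial identities imposed on $\Delta_{st}$ in Remark~\ref{identities}(2). In particular, the relation $s^j d^i = \operatorname{id}$ whenever $i\in\{j,j+1\}$ is built into the definition of $\Delta_{st}$ at every degree $n\in\mathbb{Z}$ and for every admissible non-negative pair of indices. This single family of identities will directly supply the sections and retractions required for the two parts, so the proof reduces to pointing them out.

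For (1), I would specialize the identity to $j=i$, obtaining $s^i\circ d^i=\operatorname{id}$ as a composite $[n]\to[n+1]\to[n]$ in $\Delta_{st}$; this exhibits $d^i:[n]\to[n+1]$ as a section of $s^i:[n+1]\to[n]$, so $s^i$ is a split epimorphism and in particular an epimorphism. For (2), the very same equation read the other way exhibits $s^i:[n]\to[n-1]$ as a retraction of $d^i:[n-1]\to[n]$, so $d^i$ is a split monomorphism and in particular a monomorphism. Both arguments are uniform in $n\in\mathbb{Z}$, since the generators $d^i$ and $s^i$ with $i\geq 0$, together with the identity linking them, are present at every degree in $\Delta_{st}$; the admissibility condition $i\geq 0$ on the ``new'' generator (section for (1), retraction for (2)) is automatic.

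There is no genuine obstacle: the required one-sided inverses are outright generators of $\Delta_{st}$ and the required equations are axioms of $\Delta_{st}$. A longer but more conceptual alternative would be to represent any two morphisms whose equality after pre- or post-composition with $s^i$ or $d^i$ is to be tested by lifting them to $\Delta$ at some finite stage of the colimit $\operatorname{colim}(\Delta\xrightarrow{K}\Delta\xrightarrow{K}\cdots)$, then use that $s^i$ and $d^i$ are split epi/mono already in $\Delta$, and finally use that equality in the colimit reduces to equality at a sufficiently large stage. However, the identity-based route is more direct and avoids any bookkeeping about the stage of the colimit.
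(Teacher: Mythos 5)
Your proof is correct, but it takes a genuinely different route from the paper. The paper argues directly from the colimit definition of $\Delta_{st}$: given $\alpha s^i=\beta s^i$ (resp. $d^i\alpha=d^i\beta$), it chooses representatives $\tilde\alpha,\tilde\beta$ and $s^i$ at a common finite stage of $\operatorname{colim}(\Delta\xrightarrow{K}\Delta\xrightarrow{K}\cdots)$, uses that $s^i$ is an epimorphism (resp. $d^i$ a monomorphism) already in $\Delta$, and transfers the equality back to $\Delta_{st}$ --- that is, precisely the ``longer alternative'' you sketch at the end, with the bookkeeping about stages being the standard fact that hom-sets in a filtered colimit of categories are filtered colimits of hom-sets. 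Your main argument instead extracts the identity $s^jd^i=\operatorname{id}$ for $i\in\{j,j+1\}$ from the presentation in Remark~\ref{identities}(2) and exhibits explicit one-sided inverses, so $s^i$ is a \emph{split} epimorphism and $d^i$ a \emph{split} monomorphism; this is both more elementary and a slightly stronger conclusion, and the index checks ($s^i\circ d^i=\operatorname{id}$ with the same $i\geq0$ available at every $n\in\mathbb{Z}$) are correct. The one caveat is that your route leans on the generators-and-relations description of $\Delta_{st}$, which the paper only asserts in that remark; but this reliance is easily removed, since the needed relation $s^i\circ d^i=\operatorname{id}$ holds in $\Delta$ and is preserved by the canonical functor from each stage of the colimit into $\Delta_{st}$, so your splitting argument goes through working directly from the definition as well.
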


\begin{proof}We prove (1). (2) can be proven by the same argument.
Assume that $\alpha s^i=\beta s^i$ for some morphisms $\alpha, \beta:[n]\to[m]$ in $\Delta_{st}$.
Then these morphisms $s^i$, $\alpha$, and $\beta$ are represented respectively by morphisms $s^i:[n+1+k]\to[n+k]$ and $\tilde{\alpha}, \tilde{\beta}:[n+k]\to[m+k]$ for some $k\in\mathbb{N}$ and $\tilde{\alpha} s^i=\tilde{\beta} s^i$ holds.
Since $s^i$ in $\Delta$ is an epimorphism, $\tilde{\alpha}=\tilde{\beta}$ in $\Delta$.
This shows that $\alpha=\beta$ in $\Delta_{st}$.
\end{proof}

\begin{lem}Every morphism in $\Delta_{st}$ factors uniquely as a morphism in $(\Delta_{st})_-$ followed by a mophism in $(\Delta_{st})_+$.
\end{lem}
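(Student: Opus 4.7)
The plan is to reduce to the classical unique epi--mono factorization in $\Delta$ by unwrapping the filtered colimit presentation of $\Delta_{st}$. Since $\Delta_{st}=\operatorname{colim}_k\bigl(\Delta\xrightarrow{K}\Delta\xrightarrow{K}\cdots\bigr)$ is a filtered colimit in $\mathbf{Cat}$ and $K$ acts on morphisms by concatenation with $\mathrm{id}_{[0]}$ (sending $d^i\mapsto d^i$ and $s^j\mapsto s^j$ one level up), the hom-sets of $\Delta_{st}$ are computed as filtered colimits of hom-sets of $\Delta$. Thus every $\alpha\colon[n]\to[m]$ in $\Delta_{st}$ admits a representative $\tilde\alpha\colon[n+k]\to[m+k]$ in $\Delta$ for some $k\geq 0$ with $n+k, m+k\geq 0$, and two morphisms agree in $\Delta_{st}$ if and only if suitably shifted representatives agree in $\Delta$. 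In particular, any composition of generators $s^j$ of $(\Delta_{st})_-$ is represented at a sufficiently large stage by a composition of degeneracy operators, hence by a surjection in $\Delta$; dually for $(\Delta_{st})_+$ and injections.

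For existence, fix a representative $\tilde\alpha\colon[n+k]\to[m+k]$ of $\alpha$ and apply the standard epi--mono factorization in $\Delta$ to obtain $\tilde\alpha=\tilde d\circ\tilde s$ with $\tilde s\colon[n+k]\twoheadrightarrow[r]$ a composite of degeneracies and $\tilde d\colon[r]\hookrightarrow[m+k]$ a composite of face operators. Setting $\ell:=r-k\in\mathbb{Z}$, the maps $\tilde s$ and $\tilde d$ represent morphisms $s\colon[n]\to[\ell]$ in $(\Delta_{st})_-$ and $d\colon[\ell]\to[m]$ in $(\Delta_{st})_+$ whose composite is represented by $\tilde d\circ\tilde s=\tilde\alpha$ and so equals $\alpha$.

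For uniqueness, suppose $\alpha=d_1\circ s_1=d_2\circ s_2$ are two such factorizations through objects $[\ell_1]$ and $[\ell_2]$. By enlarging $k$ we may bring all four morphisms to a common stage where each $s_i$ is represented by a genuine surjection $\tilde s_i$ and each $d_i$ by a genuine injection $\tilde d_i$ in $\Delta$; by further enlarging $k$, the equality $\tilde d_1\circ\tilde s_1=\tilde d_2\circ\tilde s_2$ can be made to hold already in $\Delta$, since $d_1\circ s_1=d_2\circ s_2$ in the filtered colimit forces equality at some finite stage. Uniqueness of the epi--mono factorization in $\Delta$ then forces $\ell_1+k=\ell_2+k$, $\tilde s_1=\tilde s_2$, and $\tilde d_1=\tilde d_2$, whence $\ell_1=\ell_2$, $s_1=s_2$, and $d_1=d_2$ in $\Delta_{st}$. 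The only mildly delicate point is the bookkeeping needed to transport all representatives to a common stage; once arranged, both existence and uniqueness are inherited directly from the corresponding property in $\Delta$.
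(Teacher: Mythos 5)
Your argument is correct and takes essentially the same route as the paper: both pass to a representative $\tilde\theta\colon[m+k]\to[n+k]$ in $\Delta$ at a finite stage of the colimit, apply the unique surjection--injection factorization there, and push the factors back into $\Delta_{st}$. You additionally spell out the common-stage bookkeeping for uniqueness, which the paper's proof leaves implicit.
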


\begin{proof}Recall that every morphism $f:[i]\to[j]$ in $\Delta$ is uniquely decomposed as $g\circ h:[i]\to[im(f)-1]\to[j]$ with $g$ a monomorphism and $h$ an epimorphism, where $im(f)$ denotes the cardinarity of the image of $f$.

Let $\theta:[m]\to[n]$ be a morphism in $\Delta_{st}$.
Then there exists a morphism $\tilde{\theta}:[m+k]\to[n+k]$ in $\Delta$ with a natural number $k\in\mathbb{N}$, which represents $\theta$.
The morphism $\tilde{\theta}$ in $\Delta$ can be written uniquely as $\tilde{\alpha}\circ\tilde{\beta}$  with $\tilde{\alpha}$ a monomorphism and $\tilde{\beta}$ an epimorphism in $\Delta$.
Then the morphisms $\alpha$ and $\beta$ in $\Delta_{st}$ represented by $\tilde{\alpha}$ and $\tilde{\beta}$ respectively are monomorphism and epimorphism respectively by the lemma above, and we have $\theta=\alpha\circ\beta$.
\end{proof}

Expect for that the codomain of the degree map is $\mathbb{Z}$, $\Delta_{st}$ satisfies the all requirements for being a regular skeletal Reedy category.

\begin{lem}For each $n\in\mathbb{Z}$, the morphism $\varphi:[n]\to[n]_t$ in $t\Delta_{st}$ is a monomorphism and an epimorphism.
\end{lem}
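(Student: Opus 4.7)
The strategy is to first prove that $\varphi : [n] \to [n]_t$ is both a monomorphism and an epimorphism in $t\Delta$ (for $n \geq 1$), and then transfer the result to $t\Delta_{st}$ via its colimit description. For $n \leq 0$ the morphism $\varphi$ exists only in $t\Delta_{st}$, but it is represented by $\varphi : [n+k] \to [n+k]_t$ in $t\Delta$ for any sufficiently large $k$, so the same argument applies after $K$-shifting.

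The monomorphism property in $t\Delta$ is immediate: $\varphi$ lies in $(t\Delta)_+$, and by the regular skeletal Reedy structure on $t\Delta$ (Ozornova-Rovelli's theorem recalled above), every $(t\Delta)_+$-morphism is a monomorphism. For the epimorphism property, I would assume $\alpha \varphi = \beta \varphi$ for $\alpha, \beta : [n]_t \to [m]_?$ and take Reedy factorizations $\alpha = \alpha_+ \circ \alpha_-$ and $\beta = \beta_+ \circ \beta_-$. Since the only generator of $(t\Delta)_-$ starting at $[n]_t$ is $\zeta^i$, either $\alpha_- = \mathrm{id}$ (in which case $\alpha = \mathrm{id}$, because no non-identity $(t\Delta)_+$-morphism starts at $[n]_t$), or $\alpha_-$ has the form $s^{j_r}\cdots s^{j_1}\zeta^i$ with codomain $[k_\alpha]$ unmarked; in the latter case the relation $\zeta^i \varphi = s^i$ forces $\alpha_-\varphi \in \Delta_- \subset (t\Delta)_-$, so that $\alpha_+ \circ (\alpha_-\varphi)$ is the Reedy factorization of $\alpha\varphi$. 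Uniqueness of Reedy factorization then yields $\alpha_+ = \beta_+$ and $\alpha_-\varphi = \beta_-\varphi$, leaving the task of showing that the postcomposition map $(t\Delta)_-([n]_t, [k]) \to \Delta_-([n], [k])$, $\alpha_- \mapsto \alpha_- \varphi$, is injective. This holds because both sides admit parallel presentations under the correspondence $\zeta^i \leftrightarrow s^i$: the $(t\Delta)_-$-relation $s^a \zeta^{b+1} = s^b \zeta^a$ (for $a \leq b$) translates exactly to the cosimplicial identity $s^a s^{b+1} = s^b s^a$ in $\Delta_-$.

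Finally, to pass from $t\Delta$ to $t\Delta_{st}$, I would lift any equation $\varphi \alpha = \varphi \beta$ or $\alpha \varphi = \beta \varphi$ in $t\Delta_{st}$ to a common $K$-shift stage in $t\Delta$; since $K(\varphi) = \varphi$ (with shifted degrees) and the mono/epi property for $\varphi$ holds at every stage, the equation forces the lifts to agree after possibly one more shift, and therefore the original morphisms are equal in $t\Delta_{st}$. The principal obstacle is the injectivity step in the epimorphism argument, where the precise matching between the $(t\Delta)_-$-relations and the $\Delta_-$-relations must be carefully verified; once that correspondence is in place, both the epimorphism claim in $t\Delta$ and its transfer to $t\Delta_{st}$ follow routinely.
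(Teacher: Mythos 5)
Your overall strategy is sound and its skeleton agrees with the paper: reduce the statement to the corresponding fact in $t\Delta$ and transfer it to $t\Delta_{st}$ by choosing representatives at a finite stage of the colimit (using $K(\varphi)=\varphi$), exactly as the paper does for $s^i$ and $d^i$ in $\Delta_{st}$. Where you diverge is that the paper handles the $t\Delta$-level input with a one-line citation: that $\varphi:[n]\to[n]_t$ is both a monomorphism and an epimorphism in $t\Delta$ is quoted from \cite{OR}, whereas you re-derive it from the regular skeletal Reedy structure. Your re-derivation is plausible and its ingredients are right --- monomorphy because $\varphi\in(t\Delta)_+$ and every morphism of $(t\Delta)_+$ is a monomorphism; epimorphy by comparing Reedy factorizations of $\alpha\varphi$ and $\beta\varphi$, using $\zeta^i\varphi=s^i$ and the fact that every nonidentity morphism of $(t\Delta)_-$ out of $[n]_t$ is a word $s^{j_r}\cdots s^{j_1}\zeta^i$ with a single $\zeta$. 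But note that the step you flag as the ``principal obstacle,'' the injectivity of $\alpha_-\mapsto\alpha_-\varphi$ on $(t\Delta)_-([n]_t,[k])$, is precisely the content that the citation to \cite{OR} covers (it can be finished either by a normal-form argument matching the relation $s^a\zeta^{b+1}=s^b\zeta^a$ with $s^as^{b+1}=s^bs^a$, or more cleanly by invoking the regular skeletal condition that parallel morphisms of $(t\Delta)_-$ are determined by their sets of sections). So your proof is correct in outline but does extra work, and is complete only once that injectivity verification is actually carried out; citing \cite{OR}, as the paper does, buys exactly that step. One small loose end in your epimorphism case analysis: you should also rule out the mixed case $\alpha_-=\operatorname{id}$, $\beta_-\neq\operatorname{id}$, which follows from uniqueness of the Reedy factorization of $\varphi$ itself together with the fact that a nonidentity morphism of $(t\Delta)_-$ strictly lowers degree.
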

\begin{proof}This follows from that $\varphi:[n]\to[n]_t$ in $t\Delta$ is a monomorphism and an epimorphism proven in \cite{OR}.

\end{proof}

We consider the following structure on $t\Delta_{st}$:
A map $\operatorname{deg}:\operatorname{ob}(t\Delta)\to\mathbb{Z}$ given by
\[\operatorname{deg}([0])=0, \operatorname{deg}([k])=2k-1, \operatorname{deg}([k]_t)=2k\]
for $k\neq 0$.
The subcategory $(t\Delta_{st})_+$ is generated by $(\Delta_{st})_+$ and morphisms $\varphi:[n]\to[n]_t$ for all $n\geq1$ and $t\Delta_-$ is generated by $\Delta_-$ and morphisms $\zeta^i:[n+1]_t\to[n]$ for all $n\geq1$ and $0\leq i\leq n$.

The arguments in \cite[Proposition C.4]{OR} and above show the following.

\begin{prop}$t\Delta_{st}$ satisfy the conditions (1), (2) and (3) in Definition \ref{regske}.
\end{prop}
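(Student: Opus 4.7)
The plan is to reduce each of conditions (1), (2), (3) for $t\Delta_{st}$ to the corresponding property of $t\Delta$, which is exactly the content of the Ozornova--Rovelli theorem cited above. The key observation is that every morphism $\alpha$ in $t\Delta_{st}$ is represented by some morphism $\tilde{\alpha}$ of $t\Delta$ after applying $K^{k}$ for sufficiently large $k\in\mathbb{N}$, and moreover $\alpha$ lies in $(t\Delta_{st})_{+}$ (resp.\ $(t\Delta_{st})_{-}$) if and only if $\tilde{\alpha}$ can be chosen in $t\Delta_{+}$ (resp.\ $t\Delta_{-}$), because $K$ sends generators of $t\Delta_{\pm}$ to generators of $t\Delta_{\pm}$. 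This should allow representatives to be pulled back and pushed forward freely.

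For condition (3), let $\alpha:[n]_{?}\to[m]_{?}$ be a morphism in $(t\Delta_{st})_{+}$, represented by $\tilde{\alpha}:[n+k]_{?}\to[m+k]_{?}$ in $t\Delta_{+}$. If $\alpha\circ\beta=\alpha\circ\gamma$ in $t\Delta_{st}$, choose a common level at which all three are represented in $t\Delta$; then $\tilde{\alpha}\circ\tilde{\beta}=\tilde{\alpha}\circ\tilde{\gamma}$ in $t\Delta$, and since $\tilde{\alpha}$ is a monomorphism (by Ozornova--Rovelli), we conclude $\tilde{\beta}=\tilde{\gamma}$, hence $\beta=\gamma$ in $t\Delta_{st}$. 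This is the same pattern as the preceding lemma on $s^{i}$ and $d^{i}$, now applied to both $\varphi$ and the operators of $t\Delta_{+}$. For condition (1), take a morphism $\sigma:[n]_{?}\to[m]_{?}$ in $(t\Delta_{st})_{-}$ with a representative $\tilde{\sigma}$ in $t\Delta_{-}$; apply condition (1) for $t\Delta$ to obtain a section $\tilde{s}$ (automatically in $t\Delta_{+}$); then the morphism of $t\Delta_{st}$ represented by $\tilde{s}$ is a section of $\sigma$.

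The main obstacle is condition (2), because one must compare the \emph{set} of sections in $t\Delta_{st}$ to the set of sections in $t\Delta$ of representatives, and a priori two different morphisms of $t\Delta$ representing the same morphism of $t\Delta_{st}$ could have incomparable sets of sections before stabilization. The plan is as follows. Given parallel $\alpha,\beta:[n]_{?}\to[m]_{?}$ in $(t\Delta_{st})_{-}$ with the same sections in $t\Delta_{st}$, pick a common $k$ and representatives $\tilde{\alpha},\tilde{\beta}:[n+k]_{?}\to[m+k]_{?}$ in $t\Delta_{-}$. I would argue that after possibly increasing $k$, each section $\tilde{s}:[m+k]_{?}\to[n+k]_{?}$ of $\tilde{\alpha}$ in $t\Delta$ descends to a section $s$ of $\alpha$ in $t\Delta_{st}$, and conversely every section of $\alpha$ in $t\Delta_{st}$ admits a representative in $t\Delta$ at some high enough level which is then a section of $K^{\ell}(\tilde{\alpha})$; applying $K$ further if necessary equalizes the levels. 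The hypothesis then forces $\tilde{\alpha}$ and $\tilde{\beta}$ to have the same set of sections in $t\Delta$ (after stabilizing the level), so condition (2) for $t\Delta$ gives $\tilde{\alpha}=\tilde{\beta}$, hence $\alpha=\beta$. The delicate step will be verifying that this levelwise matching of sections really goes through, i.e.\ that passing from $t\Delta_{st}$ back to $t\Delta$ preserves the relation ``has a section''; this should follow from the same monomorphism/epimorphism arguments used in the two preceding lemmas, combined with the compatibility of $K$ with the generators of $t\Delta_{\pm}$.
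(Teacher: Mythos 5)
Your proposal is correct and takes essentially the same route as the paper: the paper's proof is exactly the one-line reduction to Ozornova--Rovelli's Proposition C.4 combined with the preceding lemmas, i.e., choosing representatives in $t\Delta$ at a sufficiently high level, using that $K$ preserves the generators of the plus/minus subcategories, and transferring conditions (1)--(3) back along the injective colimit maps. Your extra care with condition (2) --- matching the sets of sections at a common finite level, which works because being a section is detected at any level by faithfulness of $K$ --- is precisely the verification the paper leaves implicit when it cites the earlier representative-lifting lemmas.
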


\end{document}